\theoremstyle{plain}
\newtheorem{theorem}{Theorem}[section]
\newtheorem{lemma}[theorem]{Lemma}
\newtheorem{proposition}[theorem]{Proposition}
\theoremstyle{remark}
\newtheorem{definition}[theorem]{Definition}
\newtheorem{example}{Example}
\newtheorem{remark}[theorem]{Remark}
\renewcommand*\env@matrix[1][\arraystretch]{%
  \edef\arraystretch{#1}%
  \hskip -\arraycolsep
  \let\@ifnextchar\new@ifnextchar
  \array{*\c@MaxMatrixCols c}}
\newcommand{\E}{{\mathbb E}}
\newcommand{\Q}{{\mathbb Q}}
\newcommand{\R}{{\mathbb R}}
\newcommand{\N}{{\mathbb N}}
\newcommand{\V}{{\mathbb V}}
\newcommand{\Fcal}{{\mathcal F}}
\newcommand{\Gcal}{{\mathcal G}}
\newcommand{\Hcal}{{\mathcal H}}
\newcommand{\Mcal}{{\mathcal M}}
\newcommand{\Ncal}{{\mathcal N}}
\newcommand{\Xcal}{{\mathcal X}}
\DeclareMathOperator*{\argmin}{argmin}
\DeclareMathOperator*{\argmax}{argmax}
\DeclareMathOperator*{\maximize}{maximize}
\DeclareMathOperator{\dotp}{{\bm\cdot}}
\DeclareMathOperator{\VaR}{{VaR}}
\DeclareMathOperator{\RVaR}{{RVaR}}
\newcommand{\reg}{\text{Regret}}
\DeclareMathOperator{\diam}{diam}
\newcommand{\Hnull}{\Hcal_0}
\newcommand{\rbox}[1]{{\color{red} \fbox{#1}}}
\newcommand{\umax}{u_{\max}}
\newcommand{\Abd}{A^{\mathrm{bd}}}
\newcommand{\Apsi}{A^{\psi}}
\begin{document}

% \begin{frontmatter}
% \title{Sequential testing for elicitable functionals via supermartingales}
%\title{A sample article title with some additional note\thanksref{t1}}
% \runtitle{Sequential testing for elicitable functionals via supermartingales}
%\thankstext{T1}{A sample additional note to the title.}

% \begin{aug}
% \author[A]{\fnms{Philippe} \snm{Casgrain}\ead[label=e1]{philippe.casgrain@math.ethz.ch}},
% \author[B]{\fnms{Martin} \snm{Larsson}\ead[label=e2]{larsson@cmu.edu}}
% \and
% \author[C]{
% \fnms{Johanna} \snm{Ziegel}\ead[label=e3]{johanna.ziegel@stat.unibe.ch}
% }

% %%%%%%%%%%%%%%%%%%%%%%%%%%%%%%%%%%%%%%%%%%%%%%
% %% Addresses                                %%
% %%%%%%%%%%%%%%%%%%%%%%%%%%%%%%%%%%%%%%%%%%%%%%
% \address[A]{ETH Zürich, 
% \printead{e1}}

% \address[B]{Carnegie Mellon University, 
% \printead{e2}}

% \address[C]{University of Bern, 
% \printead{e3}}

% \end{aug}

% \begin{aug}

\title{Sequential testing for elicitable functionals via supermartingales}

\author{Philippe Casgrain}
\affil{ETH Zürich. E-mail: pcasgrain@gmail.com}

\author{Martin Larsson}
\affil{Carnegie Mellon University. E-mail: larsson@cmu.edu}

\author{Johanna Ziegel}
\affil{University of Bern. E-mail: johanna.ziegel@stat.unibe.ch}

\maketitle

%%%%%%%%%%%%%%%%%%%%%%%%%%%%%%%%%%%%%%%%%%%%%%
%% Addresses                                %%
%%%%%%%%%%%%%%%%%%%%%%%%%%%%%%%%%%%%%%%%%%%%%%
% \end{aug}

\begin{abstract}
We design sequential tests for a large class of nonparametric null hypotheses based on elicitable and identifiable functionals. Such functionals are defined in terms of scoring functions and identification functions, which are ideal building blocks for constructing nonnegative supermartingales under the null. This in turn yields sequential tests via Ville's inequality. Using regret bounds from Online Convex Optimization, we obtain rigorous guarantees on the asymptotic power of the tests for a wide range of alternative hypotheses. Our results allow for bounded and unbounded data distributions, assuming that a sub-$\psi$ tail bound is satisfied.
%\texttt{<Abstract here>}
\end{abstract}

% \begin{keyword}
% \kwd{sequential statistics}
% \kwd{anytime valid testing}
% \kwd{elicitable functionals}
% \kwd{identifiable functionals}
% \kwd{online optimization}
% \end{keyword}

% \end{frontmatter}

% \tableofcontents

% \newpage

\section{Introduction}

We design sequential tests and confidence sequences for a large class of nonparametric null hypotheses based on elicitable and identifiable functionals. Such functionals include moments, quantiles, expectiles, and many other examples, all of which can be tested using the approach developed here. The null hypotheses that we cover are highly composite and nonparametric; for instance, the null could consist of \emph{all} distributions whose median, say, is a given value. Our tests are \emph{sequential}, or \emph{anytime valid}, in the sense that data is observed sequentially through time, and at each point in time the decision to stop or continue may depend on all available data without compromising Type-I error guarantees. We also obtain guarantees on the power of our tests with respect to large composite nonparametric alternative hypotheses. %These alternative hypotheses are determined from observed data in an online fashion in a manner designed to reject the null hypothesis quickly, if indeed the null is false.

The basic mechanism we use to construct sequential tests rests on the notion of \emph{test (super)martingales} due to \cite{MR2849911}. The idea is simple but powerful: a test statistic that is a nonnegative supermartingale if the null hypothesis is true can only reach large values with small probability. This can be quantified using Ville's inequality. Thus if one rejects the null only when a sufficiently large value of the test statistic has been observed, Type-I error control is ensured.

Our contribution rests on the observation that elicitable and identifiable functionals in the sense of \cite{LambertPennockETAL2008,gneiting2011making,FisslerZiegel2016} are ideal for constructing test supermartingales. Combined with a construction known as predictable mixing, one immediately obtains large families of test supermartingales which can be used as possible test statistics for the null hypothesis defined by a particular elicitable or identifiable functional.

The predictable mixing construction can be interpreted in terms of betting or trading. Finding a useful predictable mixture corresponds to determining a profitable trading strategy. The supermartingale condition under the null ensures that profits are limited if the null is true. But if the null is false, it may be possible to ``bet against the null'' in a way that leads to large profits and, hence, reject the null. Doing so requires two things. First, in order to bet against the null, one must specify a suitable distribution to bet on. Second, given this distribution, one must find a strategy that is likely to be profitable.% if the alternative is true.

We address these two points at once by making use of ideas from online convex optimization (OCO). We demonstrate how off-the-shelf algorithms can be used to produce strong trading strategies. This leads to powerful test supermartingales given as predictable mixtures of the basic set of test supermartingales constructed from the elicitable or identifiable functional used to specify the null hypothesis. A major advantage of this approach is that these algorithms come with performance guarantees in the form of regret bounds. These regret bounds translate into rigorous guarantees on the power of the resulting sequential test under a wide variety of alternative hypotheses.

%The third contribution of our work is that we develop ways of allowing unbounded data distributions. [but what about Howard et al., etc...?]

Sequential testing goes back to \cite{MR13275}. A large body of literature on the subject exists, and martingale techniques have played an important role from the beginning. This is notable in the work of Darling, Lai, Robbins, and Siegmund going back to the 1960s, although the majority of their work was in a parametric context. We refer to Appendix~F in \cite{wau_ram_20} for a historical overview and numerous references.
The concept of a test martingale was introduced in \cite{MR2849911}, and there has recently been a number of papers related to this circle of ideas, for instance \cite{howard2018timeuniform,howard2020timeuniform} which derive time-uniform confidence sequences and concentration bounds. In particular, the closely related notion of e-variables and e-processes have received significant attention; see e.g.\ \cite{gru_hei_koo_19,10.1214/20-AOS2020,xu_wang_ramdas_21,ram_ruf_lar_koo_20,MR4364897} as well as Remark~\ref{R_e_process} below. Most closely related to our paper is the work of \cite{wau_ram_20}, which develops confidence sequences for the mean of a sequence of bounded random variables. That paper makes use of the same betting perspective, which enables the authors to obtain powerful confidence sequences. It also discusses various related strands of literature and the history of the subject. However, the authors do not consider other functionals beyond the mean, and they rely on the boundedness of the data in an essential way. Our work generalizes both of these points. Moreover, they do not make use of OCO to obtain regret bounds which then translate into statements about power. Our use of regret bounds is reminiscent of \cite{MR4364897}, where regret bounds are used to derive power guarantees for the particular problem of testing exchangeability of binary sequences. OCO and regret bounds are also ubiquitous in the online learning literature, and we point in particular to the papers \cite{jun2019parameter,https://doi.org/10.48550/arxiv.2110.14099,https://doi.org/10.48550/arxiv.2112.09162} for further applications and developments based on betting ideas and martingales, as well as additional pointers to the literature. However, these papers are not concerned with elicitable and identifiable functionals. Another paper related to ours is \cite{10.1093/biomet/asab047}, which treats the problem of probability forecasting. The authors rely on the betting analogy to construct sequential tests for the statistical significance of score differences of competing forecasts. The paper \cite{https://doi.org/10.48550/arxiv.2110.00115} studies a sequential forecasting problem using martingales based on scoring functions.

The concepts of elicitability and identifiability go back to the PhD thesis of \cite{Osband1985}. However, the term elicitability was coined later \citep{LambertPennockETAL2008}, and it was popularized by \cite{gneiting2011making,steinwart2014elicitation,frongillo2015elicitation, FisslerZiegel2016}. Prior to our work, no systematic approach to the sequential testing problem for elicitable and identifiable functionals has been developed. Thus, our paper demonstrates an essential link between test supermartingales and the concepts of elicitability and identifiability.

The  paper is organized as follows. In Section~\ref{sec:seq_test}, we review the definition of test supermartingales and how they can be used to construct powerful sequential tests via predictable mixing. In Section~\ref{sec_null_hypothesis}, we discuss elicitability and identifiability, both as a way of specifying nonparametric null hypotheses and as the basis for constructing test supermartingales. Importantly, we show how sub-$\psi$ tail bounds can be leveraged to handle unbounded data. Section~\ref{sec:power-via-OCO} discusses how regret bounds from online convex optimization lead to statements about asymptotic power of the tests. In Section~\ref{sec:confidence-sequences}, we briefly discuss the related issue of confidence sequences. Section~\ref{sec:simulation_study} contains a simulation study illustrating the techniques developed in this paper. The proofs of all results are collected in the appendix. %Section~\ref{sec:conclusion} concludes.

\section{Sequential testing via supermartingales} \label{sec:seq_test} %Sequential Testing and Anytime Validity}

We consider a sequential testing environment in which a discrete-time stochastic process $(X_t)_{t \in \N}$, taking values in some measurable space $\Xcal$, is observed sequentially through time. The process $(X_t)_{t \in \N}$ is called the \emph{data generating process}. Concrete examples of data generating processes include patient data collected from clinical trials or daily profit and loss values of a trading strategy. The filtration generated by the data generating process is denoted $\Fcal := (\Fcal_t)_{t\in\N}$ where $\Fcal_t := \sigma( X_1,\ldots,X_t )$ is the information set generated by the data collected until time $t$. We let $\Fcal_0$ denote the trivial $\sigma$-algebra.

A statistical hypothesis is a collection $\Hcal \subseteq \Mcal_1(\Xcal^\N)$, where $\Mcal_1(\Xcal^\N)$ is the set of all possible probability distributions of the data generating process. Thus an element $P \in \Mcal_1(\Xcal^\N)$ is a distribution of the entire sequence $(X_t)_{t \in \N}$. The hypothesis $\Hcal$ encodes the belief that the realized data was governed by one of the distributions $P \in \Hcal$. A \emph{(sequential) test} for a given null hypothesis $\Hnull$ is defined as an $\Fcal$-stopping time $\tau$ that specifies the time at which $\Hcal_0$ is rejected. The requirement that $\tau$ be a stopping time means that the decision whether to stop and reject $\Hcal_0$, or to continue and observe more data, is only based on data available at the time the decision is made. Stopping times are allowed to take the value infinity, and this corresponds to the possibility that the test never rejects the null.

%A hypothesis $\Hcal$ encodes beliefs about which probability measures may govern the data generating process $X$. Formally, we define a test for a null hypothesis $\Hcal$ as a (possibly infinite) $\Fcal$-stopping-time $\tau$ which specifies, as a function of the data, the rule by which we reject $\Hcal$. The null hypothesis, written as $\Hnull$, typically denotes the hypothesis that a test aims to reject.

\subsection{Anytime validity and test supermartingales}

%Given a data generating process $X$, we are interested in testing a hypothesis $\Hnull$ based on the data $\{x_i\}_i$. We allow ourselves to make decisions to stop or continue collecting samples and to re-test the hypothesis at any sequence of $\Fcal$-adapted times.
In contrast to traditional hypothesis testing with a fixed and known total sample size, the total sample size that will be produced by the $X_t$ before stopping is not known in advance. As a consequence, repeatedly evaluating a test designed for fixed finite sample sizes will generate an inflated Type-I error (or size) of the test, see for example \cite{albers2019problem,o1971present}. There are techniques, such as multiple comparison p-value adjustments, to correctly modify tests a-posteriori \citep{hsu1996multiple}. However, these methods have quickly decaying power as the number of repeated tests grows large and require that the number of tests to be performed be known in advance. In order to avoid these issues, we work with the concept of a \emph{sequential}, or \emph{anytime valid}, test. This allows for data-dependent testing policies that need not necessarily be specified in advance.

\begin{definition}[sequential test]
	Let a null hypothesis $\Hnull$ be given, and $\alpha \in (0,1)$. A (valid) \emph{level-$\alpha$ sequential test} for $\Hnull$ is a stopping time $\tau_0$ such that $P(\tau_0 < \infty) \le \alpha$ for all $P\in\Hnull$. 
 % We say that a test is \emph{anytime valid}, or has \emph{Type-I error control}, at a level $\alpha\in(0,1)$ if for all $\Fcal$-stopping-times $\tau$ we have
	% \begin{equation*}
	% 	P( \text{$\Hnull$ is rejected at time $\tau$} ) \leq \alpha
	% 	\text{ for all $P\in\Hnull$.}
	% \end{equation*}
\end{definition}

Validity thus refers to Type-I error control. Alternatively, a sequential test $\tau_0$ can be understood through the rejection indicator process $\chi_t = \bm1_{\{\tau_0 \le t\}}$. This process is zero until there is enough evidence to reject the null, and then becomes one. The validity property of $\tau_0$ can be shown to be equivalent to
\[
P(\chi_\tau = 1) \le \alpha \text{ for all finite stopping times $\tau$ and all $P \in \Hnull$.}
\]
This formulation emphasizes the idea of \emph{anytime validity}: regardless of when you inspect the level of evidence, even if you do so at a random and potentially data dependent time, validity is preserved. For further discussion of anytime valid testing and inference, see e.g.\ \cite{wau_ram_20,ram_ruf_lar_koo_20,ram_gru_vov_sch_22}.

%In the above definition, we interpreting the stopping time $\tau$ as representing the time or iteration after which we decide to reject the hypothesis $\Hnull$.
%We note that, by definition, anytime validity guarantees that a test has Type-I error bounded by $\alpha$ within the sequential testing setting. A related definition, concerned with the Type-II error of a test, is that of its power. Precisely, the power of a test is the probability that a test can correctly reject a null hypothesis when it is false. That is, the power of a test $\tau$ with respect to a measure $Q\neq\Hnull$ is simply $Q( \text{ $\tau$ rejects $\Hnull$} )$,
%the probability of rejecting $\Hnull$ given that the null is false. Ideally, we wish to design tests $\tau$ for which the power is as large a possible, which translates to a test which rejects false null hypotheses with high probability.

% \begin{definition}[Power]
% 	Let $\Hnull$ be a null hypothesis and $\tau$ an $\Fcal$-adapted test for this hypothesis. For any alternative hypothesis $Q \in \Mcal_1(\Xcal^\N)\setminus \Hnull$, we define the power of $\Tcal$ with respect to $Q$ as
% 	\begin{equation*}
% 		\sup_{t\in\N} Q( \text{ $\Tcal_t$ rejects $\Hnull$} )
% 		\;,
% 	\end{equation*}
% 	the probability that $\Tcal$ ever rejects $\Hnull$ under the measure $Q$.
% \end{definition}
% We typically seek tests with high power due to their ability to quickly reject null hypotheses whenever they are false.

A classical method for constructing sequential tests is based on nonnegative supermartingales. The following definition goes back to \cite{MR2849911}.

%Using nonnegative supermartingales, it turns out to be quite easy to construct anytime-valid tests for arbitrary null hypotheses. In particular these tests make use of Ville's inequality to provide them withe anytime validity propery. Before going though with the construction of these tests, however, we present the definition of a `test supermartingale' and Ville's inequality below.
%\rbox{mention e-values} \rbox{anytime valid tests are necessarily generated by supermartingales, cite ML}

\begin{definition}[test supermartingale]
	Let a null hypothesis $\Hnull$ be given. A \emph{test supermartingale (for $\Hnull$)} is a nonnegative adapted process $W := (W_t)_{t\in\N}$ with initial value $W_0 \le 1$ that is a $P$-supermartingale for all $P\in\Hnull$.
\end{definition}

We recall that a random process $(Z_t)_{t\in\N}$ adapted to a filtration $(\Gcal_t)_{t\in\N}$ such that $\E_P[|Z_i|] < \infty$ is a $P$-supermartingale ($P$-submartingale) if for all $t\in\N$ we have $\E_P[ Z_{t+1} \mid \Gcal_t ] \leq Z_t$ ($\E_P[ Z_{t+1} \mid \Gcal_t ] \geq Z_t$). A process that is both a $P$-supermartingale and a $P$-submartingale is called a $P$-martingale, and satisfies the equality $\E_P[ Z_{t+1} \mid \Gcal_t ] = Z_t$ for all $t \in \N$. Here, $\E_P$ means that the expectation is taken with respect to the probability measure $P \in \Mcal_1(\Xcal^\N)$.

Test supermartingales can be used to construct sequential tests. The basic tool for showing validity is Ville's inequality \citep{MR3533075}, which states that any nonnegative $P$-supermartingale $W$ with $W_0 \le 1$ satisfies $P\left( \sup_{t\in\N} W_t > 1/\alpha \right) \leq \alpha$ for all $\alpha \in (0,1)$. Thus, if $W$ is a test supermartingale and $\alpha \in (0,1)$ is fixed, then the test which rejects the null as soon as $W$ reaches a value above $1/\alpha$,
\begin{equation} \label{eq:test-smg-test}
	\tau_0 = \inf\left\{ t\in\N \colon W_t > \frac{1}{\alpha} \right\},
\end{equation}
satisfies
\begin{align*}
	%P( \text{$\tau$ rejects $\Hnull$} ) =
	P( \tau_0 < \infty )
	= P\left( \sup_{t\in\N} W_t > \frac{1}{\alpha} \right) \leq \alpha,
\end{align*}
and is therefore valid at level $\alpha$.

\begin{remark} \label{R_e_process}
A notion closely related to test supermartingales is that of an \emph{e-process}, which is a nonnegative adapted process $W$ such that $\E_P[W_\tau] \le 1$ for all $P \in \Hnull$ and all stopping times $\tau$ \citep{10.1214/20-AOS2020,xu_wang_ramdas_21,ram_ruf_lar_koo_20,MR4364897}. The stopping theorem implies that every test supermartingale is an e-process, but the converse is not true \citep{ram_ruf_lar_koo_20,MR4364897}. The `static' or non-sequential analog of an e-process is known as an \emph{e-variable}, which is a nonnegative random variable $E$ such that $\E_P[ E ] \le 1$ for all $P \in \Hnull$. These notions have recently been studied extensively as a tool for safe inference \citep{gru_hei_koo_19}.
\end{remark}

\subsection{Power and growth}

In addition to validity, we are interested in power against suitable alternative hypotheses $\Hcal_1$ disjoint from $\Hcal_0$. Loosely speaking, good power means that if the true data generating distribution belongs to $\Hcal_1$, the test should reject $\Hcal_0$ quickly with high probability. For tests arising from test supermartingales $W$ via \eqref{eq:test-smg-test}, good power is achieved by designing $W$ to grow quickly with high probability under distributions in $\Hcal_1$. Quick growth of $W$ can be achieved by optimizing the Growth Rate Optimal (GRO) criterion, which has recently received significant attention in the context of e-values and e-process \citep{gru_hei_koo_19}. In our setting, the GRO criterion is as follows. At each time $T$ one seeks to maximize the expected logarithmic increment conditionally on data observed so far across all test supermartingale increments. More formally one aims to solve
\begin{equation}\label{eq_GROW}
\maximize \ \E_Q \left[ \log \frac{W_{T+1}}{W_T} \mid \Fcal_T \right] \quad \text{ subject to } \quad \sup_{P \in \Hnull} \E_P\left[ \frac{W_{T+1}}{W_T} \mid \Fcal_T \right] \le 1
\end{equation}
given the observed data $X_1,\ldots,X_T$, where $Q$ is a suitable distribution. As we will see, $Q$ need not itself be the only element of $\Hcal_1$, or even belong to $\Hcal_1$ at all. It is a purely computational device used to guide the choice of $W$.
In implementing this idea, one is faced with three key issues:
\begin{enumerate}
\item\label{issue_1} The problem \eqref{eq_GROW} optimizes over the set of all test supermartingales. Solving it requires a description of this set, or of a sufficiently rich subset.
\item\label{issue_2} A suitable distribution $Q$ has to be specified.
\item\label{issue_3} One has to actually solve \eqref{eq_GROW}, at least numerically, and ideally derive performance guarantees with respect to the set $\Hcal_1$ of alternatives.
\end{enumerate}
In this paper, we consider null hypotheses $\Hnull$ based on elicitable functionals and identifiable functionals, which admit large families of explicit test supermartingales. This addresses \ref{issue_1}. In order to address \ref{issue_2} we focus on distributions $Q$  which are not fixed in advance but rather learned in an online fashion as more and more data is observed. This idea has recently also been explored by \cite{wau_ram_20}. Having dealt with \ref{issue_1} and \ref{issue_2}, the GRO criterion becomes a concrete optimization problem which we solve using methods from Online Convex Optimization (OCO). A key feature of this approach is that OCO methods come with asymptotic performance guarantees in the form of regret bounds. We employ these bounds to show that the resulting tests have asymptotic power one under a large composite nonparametric alternative hypothesis $\Hcal_1$, in the sense that we obtain a test supermartingale which tends to infinity with probability one under every distribution in $\Hcal_1$. Consequently, if the true data distribution is some element of $\Hcal_1$ then any test $\tau_0$ of the form \eqref{eq:test-smg-test} is guaranteed to eventually reject the null: $P( \tau_0 < \infty) = 1$ for all $P \in \Hcal_1$. This power guarantee addresses \ref{issue_3}.

\subsection{Test supermartingales via mixing}
\label{sec:betting-against-the-null}

The null hypotheses considered in this paper will be constructed directly in terms of explicit families of test supermartingales $L^\theta = (L^\theta_t)_{t \in \N}$ indexed by a parameter $\theta \in \Theta$, where $\Theta$ is an (arbitrary) index set. Whenever such a family $\{ L^\theta \}_{\theta \in \Theta}$ is available, it is possible to construct new test supermartingales by combining its members. For instance, it is clear that any convex combination of test supermartingales is again a test supermartingale. More generally, one can use predictably mixed test supermartingales as shown in the following lemma; see also \cite{wau_ram_20}. In this way, one can assemble weak test supermartingales into more powerful ones. %An important feature of these `composite' processes, built on top of a base family of processes, is that they can combine weak tests to constuct more powerful ones.

\begin{lemma}[Predictably mixed supermartingale] \label{lem:predictably-controlled-test-smg}
Let $\{L^\theta\}_{\theta\in\Theta}$ be a family of test supermartingales and $(\pi_t)_{t\in\N}$ a predictable sequence of probability measures on $\Theta$.
%\footnote{This means that for each $t \in \N$, $\pi_t = \pi_t(X_1,\ldots,X_{t-t}; d\theta)$ is a probability measure on $\Theta$ that may depend on the preceding data points.} 
Then the process $W=(W_t)_{t\in\N}$ defined by $W_0 = 1$ and
	\begin{equation} \label{eq:predictably-controlled-test-smg-1}
		W_t = \prod_{i = 1}^t \int_{\Theta} \frac{L_i^\theta \;}{L_{i-1}^\theta} \, \pi_i(d\theta), \quad \text{for all $t \in \N$},
	\end{equation}
	is also a test supermartingale.
\end{lemma}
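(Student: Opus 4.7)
The plan is to verify each requirement of the definition of a test supermartingale for $W$: nonnegativity, adaptedness, the bound $W_0 \le 1$, and the supermartingale property under every $P \in \Hnull$. Nonnegativity and $W_0 = 1$ are immediate from the formula (the empty product equals $1$, and the integrand is nonnegative). Adaptedness follows because $\pi_i$ is $\Fcal_{i-1}$-measurable by predictability and $L_i^\theta$ is $\Fcal_i$-measurable, so the integral $\int_\Theta L_i^\theta / L_{i-1}^\theta\, \pi_i(d\theta)$ is $\Fcal_i$-measurable; taking finite products preserves this.

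The substantive step is the supermartingale inequality $\E_P[W_{t+1} \mid \Fcal_t] \le W_t$ for any $P \in \Hnull$. Since $W_{t+1} = W_t \cdot R_{t+1}$ with $R_{t+1} := \int_\Theta L_{t+1}^\theta / L_t^\theta\, \pi_{t+1}(d\theta)$, and $W_t$ is $\Fcal_t$-measurable and nonnegative, it suffices to show $\E_P[R_{t+1} \mid \Fcal_t] \le 1$. Here I would use Tonelli's theorem (valid since the integrand is nonnegative) to interchange the conditional expectation and the $\pi_{t+1}$-integral; this is legitimate because $\pi_{t+1}$, being predictable, acts as a fixed (though data-dependent) measure relative to $\Fcal_t$. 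The resulting expression is $\int_\Theta \E_P[L_{t+1}^\theta / L_t^\theta \mid \Fcal_t]\, \pi_{t+1}(d\theta)$, and the supermartingale property of each $L^\theta$ together with $\Fcal_t$-measurability and positivity of $L_t^\theta$ yields $\E_P[L_{t+1}^\theta / L_t^\theta \mid \Fcal_t] \le 1$, from which the desired bound follows by integrating against the probability measure $\pi_{t+1}$.

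The only delicate point, and what I expect to be the main obstacle, is handling the event $\{L_t^\theta = 0\}$, where the ratio $L_{t+1}^\theta / L_t^\theta$ is undefined. The cleanest fix is to adopt the convention $0/0 = 0$, which preserves the supermartingale inequality: on $\{L_t^\theta = 0\}$, nonnegativity of $L^\theta$ combined with the supermartingale property forces $L_{t+1}^\theta = 0$ $P$-a.s., so the convention is consistent. Integrability of $W_t$ is automatic by induction from $\E_P[W_t] \le \E_P[W_{t-1}] \le \cdots \le W_0 = 1$, closing the argument.
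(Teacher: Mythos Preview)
Your proposal is correct and follows essentially the same approach as the paper: verify nonnegativity and $W_0=1$ directly, then use Tonelli's theorem together with the $\Fcal_t$-measurability of $W_t$ and $\pi_{t+1}$, the $P$-supermartingale property of each $L^\theta$, and the fact that $\pi_{t+1}$ has total mass one to obtain $\E_P[W_{t+1}\mid\Fcal_t]\le W_t$. Your explicit treatment of the event $\{L_t^\theta=0\}$ via the convention $0/0=0$ is a slight refinement over the paper, which leaves this implicit.
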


To be precise, we assume here that $\Theta$ is a measurable space, and that $(\omega,\theta) \mapsto L_t^\theta(\omega)$ is measurable for each $t$. The condition on $(\pi_t)_{t\in\N}$ means that for each $t \in \N$, $\pi_t = \pi_t(X_1,\ldots,X_{t-t}; d\theta)$ is a probability measure on $\Theta$ that may depend in a measurable way on the preceding data points.

\begin{example}[i.i.d.\ Gaussians]
Consider the hypothesis $\Hnull = \{ P \in \Mcal_1(\Xcal^\N)\colon X_t\sim \text{ i.i.d. }   \Ncal(0,1) \}$. Then for every $\lambda\in\R$ the process $L^\lambda$ defined by $L^\lambda_0 = 1$ and for $t \in \N$ by
%	\begin{equation*}
		$L_t^\lambda = \exp\left( \lambda \sum_{i = 1}^t X_i - \frac{1}{2} \lambda^2 t \right)$
%	\end{equation*}
is a test (super)martingale. The predictably mixed process $W$ in \eqref{eq:predictably-controlled-test-smg-1} then takes the form
\[
		W_t = \prod_{i = 1}^t \int_\R \exp\left( \lambda X_i - \frac{1}{2} \lambda^2 t \right) \pi_i(d \lambda).
\]
\end{example}

\begin{remark}\label{rem_trading_interpretation}
The mixing construction~\eqref{eq:predictably-controlled-test-smg-1} admits a useful interpretation in terms of trading a portfolio of financial assets. Treating the collection of test supermartingales $\{ L^\theta \}_{\theta\in\Theta}$ as a collection of tradable assets indexed by $\theta\in\Theta$, we may think of $W_t$ in~\eqref{eq:predictably-controlled-test-smg-1} as the value of a portfolio trading these assets. Indeed, regard $\pi_t(d\theta)$ as the portfolio weights specifying the proportion of capital allocated to each asset $\theta$ at time $t-1$; this is observable at time $t-1$ because $(\pi_t)_{t \in \N}$ is  a predictable sequence. The portfolio return from time $t-1$ to $t$ is the weighted average of the individual asset returns,
\[
\int_\Theta \frac{L_t^\theta - L_{t-1}^\theta}{L_{t-1}^\theta} \, \pi_t(d\theta).
\]
Rearranging \eqref{eq:predictably-controlled-test-smg-1} one sees that this is equal to the overall portfolio return $(W_t - W_{t-1}) / W_{t-1}$.
%Indeed, rearranging the terms in \eqref{eq:predictably-controlled-test-smg-1} we get
%\begin{equation} \label{eq:predictably-controlled-test-smg-2}
%	V_t = \prod_{i = 1}^t
%	\left(
%	1 + \int_{\Theta} \frac{L_i^\theta - L_{i-1}^\theta}{L_{i-1}^\theta} \, \pi_i(d\theta) 
%	\right),
%\end{equation}
%where $\nicefrac{(L_i^\theta - L_{i-1}^\theta)}{L_{i-1}^\theta}$ is the return of asset $\theta$ in period $i$. Each of the $\pi_i$ can be seen as portfolio weights specifying the proportion of capital allocated to each asset $\theta$ in period $i$. Hence, $V_t$ can be seen as the wealth process for a portfolio trading the assets $\{ L^\theta \}_{\theta\in\Theta}$, where the investor has an asset allocation strategy given by $(\pi_i)_{i\in\N}$. Each allocation strategy $\pi_t$ must be measurable with respect to $\Fcal_{t-1}$, which, using an abuse of notation, can be interpreted as requiring that $\pi_t(d\theta) = \pi_t(x_1,x_2,\dots,x_{t-1};d\theta)$ be a function of past data.
We can think of selecting a strong allocation strategy $(\pi_t)_{t\in\N}$ as choosing bets against $\Hnull$ in order to make $W_t$ grow quickly, eventually exceeding the threshold $\alpha^{-1}$ to reject $\Hnull$. Indeed, if $\Hnull$ is false, there may exist assets $L^\theta$ which are not supermartingales, enabling one to `bet against the null' by selecting $(\pi_t)_{t\in\N}$ with weights on these processes such that the wealth process grows on average. In contrast, if $\Hnull$ is true, Ville's inequality shows that, regardless of the trading strategy employed, it is unlikely (with probability bounded by $\alpha$) that our wealth ever exceeds the threshold $\alpha^{-1}$.
\end{remark}

\begin{remark}
By choosing $\pi_t(d\theta) = L^\theta_{t-1} F(d\theta) / \int_\Theta L^{\theta'}_{t-1} F(d\theta')$ in Lemma~\ref{lem:predictably-controlled-test-smg} for some fixed probability distribution $F(d\theta)$ on $\Theta$, we obtain the wealth process $W_t = \int_\Theta L^\theta_t F(d\theta)$. This is the Robbins--Siegmund mixture (super-) martingale.
\end{remark}

In practice it is not feasible to work with general predictable sequences $(\pi_t)_{t \in \N}$. Instead we consider parsimonious specifications that tend to work well in experiments. A key example is the Dirac specification,
%\begin{equation*}
	$\pi_t = \delta_{\theta_t}$,
%\end{equation*}
where $(\theta_t)_{t\in\N}$ is a $\Theta$-valued predictable process. This is the simplest possible specification. In terms of the trading interpretation in Remark~\ref{rem_trading_interpretation}, a strategy of this kind chooses in each period one single asset where all capital is invested. The test supermartingale \eqref{eq:predictably-controlled-test-smg-1} simplifies to
\begin{equation} \label{eq:product-martingale-Dirac}
	W_t = \prod_{i=1}^t \frac{L_i^{\theta_i}}{L_{i-1}^{\theta_i}}.
\end{equation}
The form \ref{eq:product-martingale-Dirac} is similar in form to the adaptive likelihood ratio statistics of \cite{robbins1972class}, although our family of supermartingales $L^\theta_t$ will usually not be obtained as likelihood ratios.

\section{Specifying the null hypothesis} \label{sec_null_hypothesis}
%\section{Elicitable and identifiable functionals}

We consider null hypotheses involving the value of certain statistical functionals of the (conditional) distributions of the data. For example, for a given value $\lambda_0$ we may want to to test the hypothesis
\[
\Hnull = \left\{P \in \Mcal_1(\Xcal^\N)\colon \text{$\lambda_0$ is a median of the conditional distribution $P(X_t \in \cdot \mid \Fcal_{t-1})$ for $t \in \N$} \right\}.
\]
The hypotheses considered below generalize this example beyond medians to a large class of \emph{elicitable functionals} and \emph{identifiable functionals}. These concepts are reviewed below; they include quantiles, moments, expectiles, and many other examples. The key common feature of these hypotheses is that they can be expressed in the form
\begin{equation} \label{eq_sup_mg_hypothesis}
\Hnull = \left\{ P \in \Mcal_1(\Xcal^\N)\colon L^\theta \text{ is a $P$-supermartingale for all } \theta \in \Theta \right\}
\end{equation}
for some explicit family of nonnegative processes $L^\theta = (L^\theta_t)_{t \in \N}$ starting at $L^\theta_0 = 1$, indexed by a parameter $\theta \in \Theta$ where $\Theta$ is an index set. In our applications $\Theta$ will be a subset of a finite-dimensional space. Thus by construction, $\{L^\theta\}_{\theta \in \Theta}$ constitutes a family of `base' test supermartingales for $\Hcal_0$ which can be used to form other test  supermartingales through predictable mixing as explained in Subsection~\ref{sec:betting-against-the-null}. This leads to general procedures for designing sequential tests for elicitable and identifiable functionals.

\subsection{Definition of elicitability and identifiability}
\label{sec_def_elicitability_identifiability}

We review the definitions as given in \cite{FisslerZiegel2016}. 
%We briefly review the notions of \emph{elicitable functionals} and \emph{identifiable functionals}; see e.g.\ \rbox{references} for more information. 
Fix $k \in \N$ and a subset $\Lambda \subseteq \R^k$. A \emph{scoring function} is simply a measurable map $s \colon \Lambda \times \Xcal \to \R$. Let $\mathcal{M}$ be a class of probability distributions on $\mathcal{X}$. If for each distribution $\mu \in \mathcal{M}$ the map
\begin{equation}\label{eq:exp_score}
\lambda \mapsto \E_{\mu}[ s(\lambda, X) ]
%\lambda \mapsto \E_{x \sim \mu}[ s(\lambda, x) ]
\end{equation}
is well-defined and finite, let $T(\mu)$ denote the set of its minimizers. The induced map $T$ is called an \emph{elicitable functional (with respect to $\mathcal{M}$)} and $s(\lambda,x)$ a \emph{strictly consistent scoring function} for $T$. Here, $X$ denotes the canonical random variable on $\Xcal$. If $T(\mu)$ consists only of one element, that is the minimizer in \eqref{eq:exp_score} is unique, we abuse notation and also use the notation $T(\mu)$ for the minimizer. Any given elicitable functional can have many different strictly consistent scoring functions.

Similarly, an \emph{identification function} is a measurable map $m \colon \Lambda \times \Xcal \to \R^k$. If for each distribution $\mu \in \mathcal{M}$ the map
\begin{equation}\label{eq:defidfunc}
\lambda \mapsto \E_{\mu}[ m(\lambda, X) ]
%\lambda \mapsto \E_{x \sim \mu}[ m(\lambda, x) ]
\end{equation}
is well-defined and finite, let $T(\mu)$ denote the set of its zeros. Then $T$ is called an \emph{identifiable functional (with respect to $\mathcal{M}$)} and $m(\lambda,x)$ a \emph{strict identification function} for $T$. 
%We note here that, in contrast with some definitions of identifiability, we explicitly assume that $m(\lambda, x)$ takes values in $\R^k$ in order to rule out the use of set-valued identification functions. 
A zero of the expected identification function in \eqref{eq:defidfunc} is understood to hold component-wise, since $m(\lambda,X)\in\R^k$. %These definitions of elicitability and identifiability are called higher-order elicitability by \cite{FisslerZiegel2016}. 
We remark that the identifiability of a functional is conceptually different from the identifiability of parameters in statistical models. However, on a certain level there are connections between the two concepts, see \citet[Supplementary Material]{DimitriadisFisslerETAL2022}.
When applying an elicitable or identifiable functional $T$ to a distribution $\mu$ in the following, we always implicitly assume that the functional is well-defined for this distribution. 

Table~\ref{tab:elicitable-identifiable-quantities} contains some examples of commonly used functionals that happen to be both elicitable and identifiable. The presented scoring functions are standard but not strictly consistent on the maximal possible domain $\mathcal{M}$ of definition of the respective functionals. Different choices of strictly consistent scoring functions allow to show elicitability of these functionals on their natural domains of definition; see \cite{gneiting2011making} for details. Some important functionals are neither identifiable nor elicitable: Examples are the variance and the expected shortfall, where the latter is a prominent risk measure in finance and insurance.

% We note here that definitions provided above correspond to what is known as \emph{higher-order} elicitability and identifiability within the QRM literature\todoPC{cite sources}. 
% To get a sense of the link between scoring functions and identification functions, note that if $s(\lambda,x)$ is strictly convex in $\lambda$ and has a uniformly bounded gradient, then it is strictly consistent for a functional $T$ if and only if its gradient $m(\lambda,x) = \nabla_\lambda s(\lambda,x)$ is a strictly consistent identification function for $T$. We refer to \rbox{Gneiting, Fissler--Ziegel, etc.} for more details.

If $T$ is an elicitable functional with a scoring function $s(\lambda,x)$ that is convex in $\lambda$, then, under suitable conditions, $T$ is also an identifiable functional with identification function $m(\lambda, x) \in \partial_\lambda s(\lambda,x)$, an element of the subgradient of the scoring function with respect to $\lambda$. For the converse, linking an identification function $m(\lambda, x)$ to a unique convex scoring function $s(\lambda,x)$, more subtle conditions are needed, and we point interested readers to~\citet[Theorem 12.25]{rockafellar2009variational}. In the absence of convexity, scoring and identification functions are still linked through gradients under sufficient differentiability assumptions which are formalized as Osband's principle in \cite{FisslerZiegel2016}.

\begin{table}[ht]
% \centering
\begin{tabular}{@{}rccc@{}}
\toprule
\textbf{} & \textbf{$T(\mu)$} & \textbf{$s(\lambda,x)$} & \textbf{$m(\lambda,x)$} \\ \midrule
\textbf{Mean} & $\E_\mu [ X ]$ & $\frac{1}{2}(x-\lambda)^2$ & $x-\lambda$ \\
\textbf{$\alpha$-Quantile} & $\inf\{ \lambda:\alpha \geq \mu( X < \lambda ) \} $ & $|x-\lambda| \left( \, \alpha \mathbbm{1}_{\{x<\lambda\}} + (1-\alpha) \mathbbm{1}_{\{x>\lambda\}} \, \right)$ & $\mathbbm{1}_{\{x>\lambda\}} - \alpha$ \\
\textbf{Regression} & $\argmin_{\lambda\in\R^k} E_\mu[ (\lambda' Y - Z)^2 ]$ & $\frac{1}{2}\| \lambda'y  - z \|^2$ & $\lambda y y' - z y'$ \\ \bottomrule
\end{tabular}
\caption{Examples of statistical quantities that can be expressed as elicitable and identifiable functionals. In the elicitable case, for the mean, $\mathcal{M}$ is the class of all distributions with finite second moment; for the quantiles, it is the class of all continuous distributions with finite first moment; for regression, $X = (Y,Z), x = (y,z) \in \R^{k+1}$, and $\mathcal{M}$ contains all distributions on $\R^{k+1}$ with finite expected squared norm. In the identifiable case, for the mean, $\mathcal{M}$ is the class of all distributions with finite first moment; for the quantiles, it is the class of all distributions with continuous distribution function at the $\alpha$-quantile; for regression, $X = (Y,Z), x = (y,z) \in \R^{k+1}$, and $\mathcal{M}$ contains all distributions on $\R^{k+1}$ with finite mean.}
\label{tab:elicitable-identifiable-quantities}
\end{table}

%Importantly, hypotheses based on elicitable functionals $T$ of the form of definition~\ref{def:elicitable-functional} are covered by \eqref{eq_EVs_1}. Indeed, if $s(\lambda,x)$ is a scoring function for $T$ and $\lambda_0$ is a particular value that $T$ may take, then the null hypothesis that $\lambda_0 \in T(\mu_0)$ can be encoded by taking $\Theta = \Lambda$ and $f_\theta(x) = s(\lambda_0,x) - s(\theta,x)$. Here $\mu_0$ is the true data generating distribution. Similarly, suppose $T$ is an identifiable functional of the form of Definition~\ref{def:identifiable-functional} with an identification function $m(\lambda,x)$ with components $m_1(\lambda,x),\ldots,m_k(\lambda,x)$. Then the null hypothesis that $\lambda_0 \in T(\mu_0)$ can be encoded by using \eqref{eq_EVs_1}, with an equality rather than an inequality, and taking $\Theta = \{1,\ldots,k\}$ and $f_\theta(x) = m_\theta(\lambda_0,x)$ for $\theta = 1,\ldots,k$. In this way, the testing problem in Section~\ref{sec:testing-smg-hypotheses} can be viewed as an abstraction of the testing problem for elicitable and identifiable functionals.

\subsection{Elicitable and identifiable hypotheses}

We now use the concepts of elicitability and identifiability to construct null hypotheses for the sequential testing problem. Let $T$ be either an elicitable functional or an identifiable functional with scoring function $s(\lambda,x)$ (in the elicitable case) or identification function $m(\lambda,x)$ (in the identifiable case). Given a fixed value $\lambda_0 \in \Lambda$ we consider the null hypothesis
\begin{equation} \label{eq_null_seq_T}
\Hnull = \left\{ P \in \Mcal_1(\Xcal^\N)\colon \lambda_0 \in T( P( X_t \in \cdot \mid \Fcal_{t-1})) \text{ for all } t \in \N, \text{ $P$-a.s.} \right\}.
\end{equation}
Thus the hypothesis is that $T$ returns a set containing $\lambda_0$ whenever it is applied to the conditional distribution of an observation $X_t$ given all earlier observations. For instance, if $T$ is the median functional, we recover the example at the beginning of this section.

Using the definition of elicitability or identifiability, we obtain a simpler representation of $\Hnull$ in terms of supermartingales or martingales and the scoring or identification function. Specifically, we have
\begin{equation} \label{eq_elicitable_null}
\Hnull = \left\{ P \in \Mcal_1(\Xcal^\N)\colon \sum_{i=1}^t \left( s(\lambda_0, X_i) - s(\lambda, X_i) \right) \text{ is a $P$-supermartingale for all } \lambda \in \Lambda \right\}
\end{equation}
in the elicitable case, and
\begin{equation} \label{eq_identifiable_null}
\Hnull = \left\{ P \in \Mcal_1(\Xcal^\N)\colon \sum_{i=1}^t m(\lambda_0, X_i) \text{ is a $P$-martingale} \right\}
\end{equation}
in the identifiable case. Note that in the identifiable case, $\sum_{i=1}^t m(\lambda_0,x_i)$ is a \emph{vector valued} martingale, that is, a vector valued process such that each component is a martingale. For convenience, if $\Hnull$ is of the form \eqref{eq_elicitable_null} we call it an \emph{elicitable hypothesis}, and if it is of the form \eqref{eq_identifiable_null} we call it an \emph{identifiable hypothesis}.
The focus of our paper is to derive sequential tests for hypotheses of the form \eqref{eq_elicitable_null} or \eqref{eq_identifiable_null}. An interesting related question is to characterize the properties of the stochastic processes with distributions in $\mathcal{H}_0$. This question has been studied by \citet{FrongilloNobel2022}. 

%\begin{remark}
Let us spell out why, in the elicitable case, \eqref{eq_null_seq_T} essentially coincides with \eqref{eq_elicitable_null}; the identifiable case is similar. Due to the definition of elicitability, $\lambda_0 \in T( P( X_t \in \cdot \mid \Fcal_{t-1}))$ is equivalent to having $\E_P[s(\lambda_0,X_t) \mid \Fcal_{t-1}] \le \E_P[s(\lambda,X_t) \mid \Fcal_{t-1}]$ for all $\lambda \in \Lambda$. This holds for all $t \in \N$ if and only if the process $\sum_{i=1}^t ( s(\lambda_0, X_i) - s(\lambda, X_i) )$ is a $P$-supermartingale for all $\lambda \in \Lambda$. Thus the right-hand sides of \eqref{eq_null_seq_T} and \eqref{eq_elicitable_null} are essentially the same. There is one subtlety that we have neglected in this argument which is usually irrelevant in applications. Since strictly consistent scoring functions are not unique, it may happen that the elicitable functional is defined for a larger class of distributions than the one where the chosen strictly consistent scoring function in \eqref{eq_elicitable_null} has finite expectation. This means that the moment conditions on the conditional distributions in \eqref{eq_elicitable_null} may be slightly stronger than in \eqref{eq_null_seq_T}. However, for many examples including the ones in Table \ref{tab:elicitable-identifiable-quantities}, this problem does not arise since we work with score differences.
%\end{remark}

%\begin{remark}
	The scoring function which elicits a functional $T$ is usually not unique. For example, the class of consistent scoring functions for the mean consists of all Bregman loss functions \citep{Savage1971,FrongilloKash2015}. %In particular,, amongst others, show that the score functions, which elicit functionals which are ratios of expected values, can be expressed as a parametric family of of Bregman divergences.
	Although there are no general guidelines for how one should select a scoring function, it is often natural to give preference to scoring functions that satisfy certain additional desirable properties, a relevant example in our setting being convexity in the first argument. For the mean, ratios of expectations and quantiles, convex strictly consistent scoring functions are essentially unique, see \citet[Corollary 4.2.17]{Fissler2017}, \cite{caponnetto2005note} and \citet[Corollary 10]{steinwart2014elicitation}. In the context of estimation in semi-parametric models for a quantile or the mean, \cite{komunjer2010semiparametric,komunjer2010efficient,dimitriadis2020efficiency} show that there exist unique choices of scoring functions which maximize the asymptotic efficiency of the estimators, but these are different from the convex choices described above.
%\end{remark}

The form of $\Hnull$ in \eqref{eq_elicitable_null} and \eqref{eq_identifiable_null} are suggestive of how one could construct families $\{L^\theta\}_{\theta \in \Theta}$ of base test supermartingales. If $s(\lambda,x)$ or $m(\lambda,x)$ is uniformly bounded, this is straightforward, see Subsection \ref{sec:uniformly-bounded-hypotheses}. The unbounded case requires to include additional moment bounds, which we achieve by imposing a sub-$\psi$ condition, see Subsection~\ref{sec:sub-psi-hypotheses}. %and~\ref{sec_unbounded_elicitable_ident_hyp}. 

% of nonnegative supermartingales which can be in turn used to construct hypothesis tests. Under additional assumptions on the underlying data as well as the the scoring and identification functions involved, we can produce such families of supermartingales. We concern ourselves with two distinct cases, the first involving hypotheses on uniformly bounded scoring and identification functions, and the second involving scoring and identification functions which have upper bounded cumulant generating function. 

\subsection{Uniformly bounded scoring and identification functions}
\label{sec:uniformly-bounded-hypotheses}

We construct parametric families $\{L^\theta\}_{\theta\in\Theta}$ of test martingales when the score difference, $(\lambda,x) \mapsto s(\lambda_0,x) - s(\lambda,x)$, or the norm of the identification function, $x \mapsto m(\lambda_0,x)$, are uniformly bounded. %In the lemmas that follow, we present test martingale constructions separately for the elicitable and identifiable cases. 

\begin{lemma}[Test martingales for elicitable hypotheses]
	\label{lem:test-smg-family-bded-elic}
	Consider an elicitable hypothesis $\Hnull$ of the form~\eqref{eq_elicitable_null}, and assume that $\,\inf_{\lambda\in\Lambda, x\in\Xcal} \{ s(\lambda_0,x) - s(\lambda,x) \} > -1$.
	For each $\lambda\in\Lambda$, define the processes $L^\lambda = (L^\lambda_t)_{t\in\N}$ by
	\begin{equation*}
		L^\lambda_t = \prod_{i=1}^t \left( 1 + s(\lambda_0,X_i) - s(\lambda,X_i) \right)
		\;.
	\end{equation*}
	Then the collection of processes $\{ L^\lambda \}_{\lambda\in\Lambda}$ forms a family of $\Hnull$ test supermartingales.
\end{lemma}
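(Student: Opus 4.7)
The plan is to verify directly the three defining properties of a test supermartingale for each $L^\lambda$: (i) $L^\lambda_0 \le 1$, (ii) $L^\lambda_t \ge 0$ for all $t$, and (iii) the one-step supermartingale inequality under every $P \in \Hnull$. The first is trivial from the empty-product convention $L^\lambda_0 = 1$. For the second, I would observe that the uniform lower bound $\inf_{\lambda,x}\{s(\lambda_0, x) - s(\lambda, x)\} > -1$ makes each factor in the product strictly positive, so $L^\lambda_t > 0$ pointwise; adaptedness to $\Fcal$ is immediate from the product representation.

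The main step is (iii). Using the recursion $L^\lambda_{t+1} = L^\lambda_t\bigl(1 + s(\lambda_0, X_{t+1}) - s(\lambda, X_{t+1})\bigr)$ and pulling the $\Fcal_t$-measurable nonnegative factor $L^\lambda_t$ out of the conditional expectation, one obtains
\[
\E_P[L^\lambda_{t+1} \mid \Fcal_t] \;=\; L^\lambda_t \Bigl(1 + \E_P\bigl[s(\lambda_0, X_{t+1}) - s(\lambda, X_{t+1}) \,\big|\, \Fcal_t\bigr]\Bigr).
\]
By the supermartingale characterization of $\Hnull$ in~\eqref{eq_elicitable_null}, the process $\sum_{i=1}^t (s(\lambda_0,X_i)-s(\lambda,X_i))$ is a $P$-supermartingale, so its conditional one-step increment $\E_P[s(\lambda_0, X_{t+1}) - s(\lambda, X_{t+1}) \mid \Fcal_t]$ is almost surely nonpositive. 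This gives $\E_P[L^\lambda_{t+1}\mid\Fcal_t] \le L^\lambda_t$, which is exactly (iii).

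The only subtlety worth flagging is integrability. Finiteness of the conditional expectation of the score difference is already built into membership of $P$ in $\Hnull$, since $\sum_{i=1}^t(s(\lambda_0,X_i)-s(\lambda,X_i))$ is postulated there to be a bona fide $P$-supermartingale (hence has integrable increments under the paper's convention). Integrability of $L^\lambda_t$ itself then follows by induction from $\E_P[L^\lambda_0] = 1$ and the one-step inequality just derived. I do not anticipate any real obstacle: the substantive content of the lemma is precisely the passage from the \emph{additive} supermartingale condition on the score-difference sum to the \emph{multiplicative} supermartingale property of the product $L^\lambda$, which the calculation above makes transparent, while the uniform lower bound hypothesis is what makes each multiplicative factor nonnegative so that the product stays nonnegative.
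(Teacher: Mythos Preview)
Your proof is correct and follows essentially the same approach as the paper: factor the recursion, pull out the $\Fcal_t$-measurable nonnegative $L^\lambda_t$, and use the additive supermartingale condition in~\eqref{eq_elicitable_null} to bound the conditional score-difference by zero. If anything, your explicit treatment of integrability is slightly more careful than the paper's version.
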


The lower bound of $-1$ appearing in the assumption that $\inf_{\lambda\in\Lambda, x\in\Xcal} \{ s(\lambda_0,x) - s(\lambda,x) \} > -1$ is without loss of generality since scoring functions may be rescaled by positive constants leaving the elicitable functional itself unchanged. 
In a similar fashion, we may construct test martingales for identifiable hypotheses as follows.

\begin{lemma}[Test martingales for identifiable hypotheses]
	\label{lem:test-smg-family-bded-ident}
 	Consider an identifiable hypothesis $\Hnull$ of the form~\eqref{eq_identifiable_null}, assume that $\sup_{x\in\Xcal}\| m(\lambda_0,x) \| < \infty$ and define $\Abd_{m,\lambda_0} := \{ \eta \in \R^d : \inf_{x\in\Xcal} \langle \eta , m(\lambda_0,x) \rangle > -1 \}$. For each $\eta\in \Abd_{m,\lambda_0}$, define the process $L^\eta = (L^\eta_t)_{t\in\N}$ by
	\begin{equation*}
		L^\eta_t = \prod_{i=1}^t \left( 1 + \langle \eta \mathrel{,} m(\lambda_0,X_i)\rangle \right)
		\;.
	\end{equation*}
	Then $\Abd_{m,\lambda_0}$ is convex with non-empty interior, and the set $\left\{L^\eta\right\}_{\eta \in \Abd_{m,\lambda_0}}$ forms a family of $\Hnull$ test martingales.
\end{lemma}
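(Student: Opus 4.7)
The plan is to break the statement into three bite-sized claims: (i) convexity of $\Abd_{m,\lambda_0}$; (ii) non-emptiness of its interior; and (iii) that each $L^\eta$ is a non-negative test martingale under $\Hnull$. The boundedness assumption $\sup_{x\in\Xcal}\|m(\lambda_0,x)\| < \infty$ enters only in (ii), while (iii) relies on the defining martingale property of the hypothesis in \eqref{eq_identifiable_null}.

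For convexity, I would fix $\eta_1,\eta_2 \in \Abd_{m,\lambda_0}$, set $c_i := \inf_{x\in\Xcal}\langle \eta_i, m(\lambda_0,x)\rangle > -1$, and use linearity of the inner product to obtain
\[
\inf_{x\in\Xcal} \langle \alpha \eta_1 + (1-\alpha)\eta_2 \mathrel{,} m(\lambda_0,x)\rangle \;\ge\; \alpha c_1 + (1-\alpha) c_2 \;>\; -1
\]
for every $\alpha \in [0,1]$. For the non-empty interior, observe that $0 \in \Abd_{m,\lambda_0}$ trivially, and if $M := \sup_{x\in\Xcal}\|m(\lambda_0,x)\| < \infty$ then by Cauchy--Schwarz the open ball $\{\eta : \|\eta\| < 1/(M+1)\}$ (or $\R^d$ itself if $M=0$) is contained in $\Abd_{m,\lambda_0}$.

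For (iii), fix $\eta \in \Abd_{m,\lambda_0}$ and $P \in \Hnull$. Non-negativity of $L^\eta$ follows from the fact that each factor $1 + \langle \eta, m(\lambda_0, X_i)\rangle$ is bounded below by $1 + \inf_{x\in\Xcal}\langle \eta, m(\lambda_0,x)\rangle > 0$ by definition of $\Abd_{m,\lambda_0}$. The initial value is $L^\eta_0 = 1$ (empty product). For the martingale property, boundedness of the factors guarantees integrability, and then
\[
\E_P\!\left[\frac{L^\eta_{t+1}}{L^\eta_t}\,\Big|\,\Fcal_t\right] = \E_P\!\left[1 + \langle \eta, m(\lambda_0,X_{t+1})\rangle \,\Big|\,\Fcal_t\right] = 1 + \langle \eta, \E_P[m(\lambda_0,X_{t+1})\mid \Fcal_t]\rangle = 1,
\]
where the final equality uses that the vector-valued process $\sum_{i=1}^{t} m(\lambda_0,X_i)$ is a $P$-martingale under the identifiable hypothesis \eqref{eq_identifiable_null}, so its conditional increment vanishes component-wise. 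Multiplying through by $L^\eta_t$ shows $L^\eta$ is a $P$-martingale, and therefore a test martingale.

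None of the steps is really difficult; the only small thing to be careful about is handling the vector-valued conditional expectation and verifying that the infimum used to define $\Abd_{m,\lambda_0}$ is preserved under convex combinations (which is why strict inequality $>-1$, rather than $\geq -1$, propagates). I expect the bulk of the argument to be mechanical, with the cleanest path being to prove convexity and the interior condition first, and then to treat the martingale computation in a single display as above.
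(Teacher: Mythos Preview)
Your proposal is correct and follows essentially the same route as the paper: Cauchy--Schwarz gives an open ball around the origin inside $\Abd_{m,\lambda_0}$, linearity of the inner product gives convexity, and the one-line conditional expectation computation using $\E_P[m(\lambda_0,X_{t+1})\mid\Fcal_t]=0$ yields the martingale property. Your treatment is in fact slightly more careful than the paper's (you address integrability and the edge case $M=0$ explicitly), but the argument is the same.
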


% Again, we note that the right hand side of the upper bound, $\| m(\lambda_0,x) \| < 1$ can be replaced by any positive constant without loss of generality, since we may simply re-scale $m$ without changing the underlying identifiable functional. It is also worth pointing out that the boundedness condition in the identifiable case only involves $\lambda = \lambda_0$, whereas in the elicitable case it involves all $\lambda$.

\begin{remark} \label{rem:ident-vs-convex-bounded}
Whenever the scoring function $s(\lambda,x)$ is convex in $\lambda$ and satisfies the conditions of Lemma~\ref{lem:test-smg-family-bded-elic}, there is a direct connection between the two martingale constructions presented above. Indeed, if we let $m(\lambda_0,x) \in \partial_\lambda s(\lambda_0,x)$, then
\begin{equation} \label{eq:convex-elic-ident-relation}
	-1 < s(\lambda_0,x) - s(\lambda,x) \leq \langle \lambda_0 - \lambda , m(\lambda_0,x) \rangle
\end{equation}
for all $x$ and $\lambda$. Hence, each increment of the test martingale construction of Lemma~\ref{lem:test-smg-family-bded-ident} can be thought of as a linearization of the increments of the processes defined in Lemma~\ref{lem:test-smg-family-bded-elic}. Moreover, equation~\eqref{eq:convex-elic-ident-relation} implies that for any elicitable hypotheses with convex scoring function, identifiable test martingales generated by Lemma~\ref{lem:test-smg-family-bded-ident} will dominate the elicitable test martingale Lemma~\ref{lem:test-smg-family-bded-elic} whenever $\eta = \lambda-\lambda_0$. Indeed, it is easy to verify that 
\begin{equation*}
    0 < \prod_{i=1}^t ( 1 + s(\lambda_0,x_i) - s(\lambda,x_i) )
    \leq
    \prod_{i=1}^t ( 1 + \langle \lambda_0 - \lambda , m(\lambda_0,x_i) \rangle )
    \;,
\end{equation*}
and that the right-hand side produces a valid test supermartingale for all $\lambda,\lambda_0\in\Lambda$. This observation suggests that whenever an elicitable functional $T$ admits a bounded and convex scoring function, the test generated by its subgradient using Lemma~\ref{lem:test-smg-family-bded-ident} will always be more powerful that the one generated by Lemma~\ref{lem:test-smg-family-bded-elic}.
\end{remark}

\citet[Proposition 3.2.1]{Fissler2017} shows that, under suitable conditions, identification functions are unique up to multiplication with a matrix valued function in $\lambda$. Therefore, the Remark \ref{rem:ident-vs-convex-bounded} does not only apply to a subgradient of a convex scoring function but to any identification function, as long as a convex scoring function for the respective functional exists, and with a suitable modification of the relation $\eta = \lambda_0 - \lambda$.

In the setting of analyzing the asymptotic efficiency of semi-parametric estimators of elicitable and identifiable functionals, a similar relation is observed, in which an estimator generated by the identification function will always be asymptotically more efficient than its elicitable counterpart \citep{dimitriadis2020efficiency}.

The uniform boundedness assumptions in Lemmas~\ref{lem:test-smg-family-bded-elic} and~\ref{lem:test-smg-family-bded-ident} may appear to be restrictive. However, they cover a number of cases of interest including the mean whenever the data generating process $\{X_i\}_{i\in\N}$ is bounded, see also \cite{wau_ram_20}.
%it will yield both a bounded functional $T(\mu)$ and domain $\Lambda$, as well as uniformly bounded scoring or identification functions.
A second relevant example are (vectors of) quantiles, where the uniform boundedness assumption for the identification function is met, regardless of whether the data generating process is bounded or not. Indeed, it is easy to see from the rightmost column of Table~\ref{tab:elicitable-identifiable-quantities} that $\| m(\lambda,x) \| \leq \max\{ \alpha\mathrel{,}1-\alpha\}$ is uniformly bounded. Hence the family of test martingales of Lemma~\ref{lem:test-smg-family-bded-ident} is always valid in the case of testing quantiles.

%\subsection{Sub-\texorpdfstring{$\psi$}{psi} variables and processes}
\subsection{Test supermartingales for sub-\texorpdfstring{$\psi$}{ψ} hypotheses}
\label{sec:sub-psi-hypotheses}

In the more general case that the scoring function or identification function is unbounded, we construct families of test martingales under the assumption of a tail bound on the scoring or identification function which involves bounding the cumulant generating function. We introduce the definition of a sub-$\psi$ process below, a notion related to those introduced in~\cite{10.1214/aop/1176996452,10.1214/009117904000000397} but most closely related to~\citet[Definition~1]{howard2018timeuniform}.

\begin{definition}[Sub-$\psi$ Process] \label{def:sub-psi-rv}
    Let $Y=(Y_t)_{t\in\N}$ and $V=(V_t)_{t\in\N}$ be $\Fcal$-adapted processes, where the variance process $V_t$ is assumed to be non-negative and $Y_0=V_0=0$. We say that $(Y,V)$ is sub-$\psi$ if there is a $\umax>0$ and a nonnegative convex function $\psi:[0,\umax)\to[0,\infty)$ satisfying satisfying $\psi(0) = \psi^\prime(0) = 0$, where $\psi^\prime(0)$ is its right derivative, and for each $u\in[0,\umax)$,
    \begin{equation*}
        (e^{u \, ( Y_t - \bar Y_t ) - V_t \, \psi(u)})_{t \in \N}
    \end{equation*}
    is a supermartingale, where $\bar Y_t = \sum_{i=1}^{t} \E[Y_i - Y_{i-1} \mid \Fcal_{i-1}]$.
\end{definition}

% \begin{definition} \label{def:sub-psi-rv}
% 	Let $\umax\in (0,\infty]$ and $\psi : [0,{u}_\text{max}) \to [0,\infty)$ be a nonnegative convex function satisfying $\psi(0) = \psi^\prime(0) = 0$, where $\psi^\prime(0)$ is its right derivative. % and where we use the convention that $\psi(u) = +\infty$ outside $[0,u_\text{max})$. 
% 	We say that an integrable random variable $Y$ is \emph{sub-$\psi$} if
% 	\begin{equation*}
% 		\E\left[ e^{ u \,(Y - \E[Y]) } \right]  \le e^{\psi(u)} \text{ for all } {u} \in [0, {u}_\text{max}).
% 	\end{equation*}
% \end{definition}

%  A useful way of understanding the sub-$\psi$ condition is as the assumption that the data satisfies a right tail bound.
%\begin{remark}
 Definition~\ref{def:sub-psi-rv} is similar to \citet[Definition~1]{howard2018timeuniform}, but there are some noteworthy differences. In particular, \citet[Definition~1]{howard2018timeuniform} is a weaker condition in that it allows $e^{u \, ( Y_t - \bar Y_t ) - V_t \, \psi(u)}$ to only be upper-bounded by a supermartingale, rather than be a supermartingale itself. We make the choice of requiring the supermartingale condition in order to be able work with the supermartingale predictable mixing introduced in Section~\ref{sec:betting-against-the-null}, which would break down without this assumption.
 Further discussion of the sub-$\psi$ condition and its applications in time-uniform confidence bounds can be found in \cite{howard2018timeuniform,howard2020timeuniform}. In particular, we point the reader to~\citet[Section 3.1, Table 3]{howard2018timeuniform} for a collection of commonly used $\psi$ functions and variance processes $V_t$ which are valid under a wide variety of assumptions. 
%\end{remark}

Typically, $V_t=t$ is the simplest possible choice of variance process, and we use it for all concrete examples in this paper. We have chosen to state our theoretical results for the more general Definition \ref{def:sub-psi-rv} in order to be consistent with existing literature. When $V_t=t$, the sub-$\psi$ condition specializes to (conditional, one-sided versions of) sub-Gaussian, sub-Gamma, sub-Exponential, sub-Bernoulli and related conditions on the increments $\Delta Y_t = Y_{t+1} - Y_t$ of $Y$, obtained by choosing $\psi$ to be the corresponding cumulant generating function. Specifically, the condition in Definition  \ref{def:sub-psi-rv} is then equivalent to
\[
	\E\left[ e^{ u \,(\Delta Y_t - \E[\Delta Y_t\mid \Fcal_t]) }\mid \Fcal_{t-1} \right]  \le e^{\psi(u)} \text{ for all $u \in [0, {u}_\text{max})$, $t \in \N$.}
\]
This condition implies a bound on the right tail probabilities of the increments of a sub-$\psi$ process $Y$. Indeed, Chernoff's inequality \cite[see e.g.][]{HAGERUP1990305} states that
\begin{equation*} \label{eq:cdf-bound-psi}
	% \log P\left(  Y > c + \E[Y] \right) \leq -\psi^{\ast}\left( c \right)
	\log P\left(  
	\Delta Y_t > c + \E[ \Delta Y_t \mid \Fcal_t ] 
	\middle|
	\Fcal_t
	\right) \leq -\psi^{\ast}\left( c \right)
	\text{ for all $t\in\N$},
\end{equation*}
where  $\psi^\ast(c) = \sup\{ u \, c - \psi(u) \colon u \in [0, u_\text{max}) \}$ is the convex conjugate of $\psi$.

The following lemma shows that, for a given sub-$\psi$ process $Y=(Y_t)_{t \in \N}$, the supermartingale property on $Y$ is equivalent to the existence of a non-negative supermartingale.

\begin{lemma} \label{lem:sub-psi-iff-neg-mean}
    Suppose that $(Y,V) = (Y_t,V_t)_{t\in\N}$ is an $\Fcal$-adapted sub-$\psi$ process. Then $Y$ is a supermartingale if and only if $(e^{u \, Y_t - V_t \psi(u)})_{t \in \N}$
    is a supermartingale for all $u \in [0,u_\text{max})$.
\end{lemma}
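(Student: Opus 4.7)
The plan is to exploit the decomposition
\[
e^{u Y_t - V_t \psi(u)} \;=\; M_t^u \cdot e^{u \bar Y_t},
\qquad \text{where } M_t^u := e^{u(Y_t - \bar Y_t) - V_t \psi(u)},
\]
and the two key structural facts: (a) by the sub-$\psi$ assumption, $M^u$ is a nonnegative supermartingale for every $u \in [0,\umax)$; and (b) $\bar Y_t = \sum_{i=1}^{t}\E[Y_i - Y_{i-1}\mid\Fcal_{i-1}]$ is $\Fcal_{t-1}$-measurable, hence $(\bar Y_t)_{t\in\N}$ is predictable. The two directions of the equivalence are then short manipulations.

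For the ``only if'' direction, assume $Y$ is a supermartingale. Then each increment $\E[Y_{t+1}-Y_t\mid\Fcal_t]\le 0$, so $\bar Y_{t+1}\le \bar Y_t$, and for $u\ge 0$ we have $e^{u\bar Y_{t+1}}\le e^{u\bar Y_t}$. Using predictability of $\bar Y_{t+1}$ with respect to $\Fcal_t$ followed by the supermartingale property of $M^u$,
\[
\E\!\left[e^{uY_{t+1} - V_{t+1}\psi(u)} \mid \Fcal_t\right]
= e^{u \bar Y_{t+1}}\,\E[M^u_{t+1}\mid \Fcal_t]
\le e^{u \bar Y_t}\, M^u_t
= e^{u Y_t - V_t \psi(u)},
\]
which establishes the supermartingale property of $(e^{uY_t-V_t\psi(u)})_{t\in\N}$.

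For the ``if'' direction, assume $(e^{uY_t - V_t\psi(u)})_{t\in\N}$ is a supermartingale for each $u\in[0,\umax)$. Dividing the supermartingale inequality by the $\Fcal_t$-measurable positive quantity $e^{uY_t-V_t\psi(u)}$ yields
\[
\E\!\left[e^{u(Y_{t+1}-Y_t)}\mid \Fcal_t\right] \;\le\; e^{(V_{t+1}-V_t)\psi(u)}.
\]
Applying Jensen's inequality to the convex function $x\mapsto e^{ux}$ gives $e^{u \E[Y_{t+1}-Y_t\mid\Fcal_t]}\le e^{(V_{t+1}-V_t)\psi(u)}$, so dividing by $u>0$ we obtain
\[
\E[Y_{t+1}-Y_t\mid\Fcal_t] \;\le\; (V_{t+1}-V_t)\,\frac{\psi(u)}{u}.
\]
Letting $u\downarrow 0$ and using $\psi(0)=\psi'(0)=0$ gives $\psi(u)/u\to 0$, so $\E[Y_{t+1}-Y_t\mid\Fcal_t]\le 0$, i.e.\ $Y$ is a supermartingale (integrability being implicit in the formulation).

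The main subtlety — not really a hard step but the one that requires care — is the ``if'' direction, where one must use the condition $\psi'(0)=0$ in Definition~\ref{def:sub-psi-rv} together with nonnegativity of $V_{t+1}-V_t$ (which follows from $V$ being a variance process, typically taken non-decreasing). Everything else is an algebraic rearrangement once the decomposition $N^u_t = M^u_t e^{u\bar Y_t}$ with predictable $\bar Y$ is in hand.
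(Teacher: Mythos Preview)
Your ``only if'' direction is correct and coincides with the paper's argument: both use the factorization $e^{uY_t-V_t\psi(u)}=M_t^u\,e^{u\bar Y_t}$ with $\bar Y_{t+1}$ being $\Fcal_t$-measurable, the sub-$\psi$ supermartingale property of $M^u$, and monotonicity of $\bar Y$ under the supermartingale assumption on $Y$.

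In the ``if'' direction there is a gap. After dividing the supermartingale inequality by $e^{uY_t-V_t\psi(u)}$ you obtain
\[
\E\!\left[e^{u(Y_{t+1}-Y_t)-(V_{t+1}-V_t)\psi(u)}\,\middle|\,\Fcal_t\right]\le 1,
\]
but you then write $\E[e^{u(Y_{t+1}-Y_t)}\mid\Fcal_t]\le e^{(V_{t+1}-V_t)\psi(u)}$, which would require $V_{t+1}$ to be $\Fcal_t$-measurable. Definition~\ref{def:sub-psi-rv} only assumes $V$ is $\Fcal$-\emph{adapted}, not predictable, so $V_{t+1}-V_t$ cannot be pulled outside the conditional expectation. (The right-hand side of your displayed inequality is not even $\Fcal_t$-measurable in general.) The fix along your lines is easy: apply Jensen directly to the correct inequality to get
\[
u\,\E[Y_{t+1}-Y_t\mid\Fcal_t]\;\le\;\psi(u)\,\E[V_{t+1}-V_t\mid\Fcal_t],
\]
divide by $u>0$, and send $u\downarrow 0$ using $\psi(u)/u\to 0$; this only adds a mild integrability requirement on $V_{t+1}-V_t$.

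For comparison, the paper handles the ``if'' direction differently: it keeps $\Delta V_{t-1}$ inside and differentiates at $u=0$, writing
\[
\E[\Delta Y_{t-1}\mid\Fcal_{t-1}]
=\lim_{u\downarrow 0}\frac{\E\!\left[e^{u\Delta Y_{t-1}-\Delta V_{t-1}\psi(u)}\mid\Fcal_{t-1}\right]-1}{u}\le 0,
\]
with the interchange of limit and expectation justified by dominated convergence. Your Jensen route (once corrected) is a legitimate and arguably cleaner alternative to this derivative-at-zero argument.
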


%Hence, Lemma~\ref{lem:sub-psi-iff-neg-mean} shows that if $Y$ sub-$\psi$ process, then $Z=(Z_t)_{t\in\N}$ defined as $Z_t=\prod_{i=1}^t e^{Y_i - \psi(u)}$ is a super-martingale if and only if $Y$ is a supermartingale. This very observation will be the basis of the test supermartingale construction method presented in the next section.

%\fbox{Find home for this:} In Appendix~\rbox{fill this in}, we summarise some relevant examples of common settings where the score function or identification function norm are sub-$\psi$.

% \begin{remark}[Generalized Definition of a Sub-$\psi$ Process]
% \todoPC{@ML,JZ: This is an example of how the math changes a bit when adding in these new definitions. Most other theorems should just follow suit. I think changes to the remaining theorems should be relatively minimal though we would need to comb the text and modify the notation everywhere. It might also be useful to mention the example of a square integrable identification function, where the test becomes like a sort of t-test. }

% Following \citet[Table~3 \& Lemma~3]{howard2018timeuniform} we find that this definition of a sub-$\psi$ process allows us to generalize the concept of test supermartingales to elicitable or identifiable functionals with finite second or third moment and without necessarily finite exponential moments. Letting $V_t = t$, we arrive at the previous version of the definition.

% \end{remark}

We say that a family  $\{Y^\theta\}_{\theta\in\Theta}$ of integrable processes indexed by $\Theta$ is sub-$\psi$ if for each $\theta\in\Theta$ there is a function $\psi_\theta:[0,u_\text{max}) \to [0,\infty)$ and a process $V^\theta$ such that for each $\theta\in\Theta$, $Y^\theta$ is sub-$\psi_\theta$. We note that although $\theta\mapsto(\psi_\theta(\cdot),V^\theta)$ varies with $\theta$, the interval $[0,u_\text{max})$ is assumed to be the same for all $\theta\in\Theta$.
%\subsection{Test supermartingales for sub-\texorpdfstring{$\psi$}{ψ} hypotheses}
%\label{sec_unbounded_elicitable_ident_hyp}
Using Lemma~\ref{lem:sub-psi-iff-neg-mean}, we construct families of test supermartingales under the assumption that the scoring or identification functions satisfy a sub-$\psi$ condition, allowing us to extend the sequential testing methodology to unbounded data. 
%In the lemmas that follow, we specify the precise assumptions and resulting test supermartingale families.

\begin{lemma}[Test supermartingales for sub-\texorpdfstring{$\psi$}{ψ} elicitable hypotheses]
	\label{lem:test-smg-family-subpsi-elic}
Let $T$ be an elicitable functional with scoring function $s(\lambda,x)$ and let $\Hcal_0$ be an elicitable hypothesis of the form~\eqref{eq_elicitable_null}. For every $\lambda\in\Lambda$ define $Y_t^\lambda = \sum_{i=1}^t s(\lambda_0,X_i) - s(\lambda,X_i)$ and $V_t^\lambda = \sum_{i=1}^t v^\lambda_i$ for some nonnegative $\Fcal$-adapted process $(v_i^\lambda)_{i\in\N}$. If the family $\{ (Y^\lambda,V^\lambda) \}_{\lambda\in\Lambda}$ is sub-$\psi$ under every measure in $\Hcal_0$, then for each $\lambda \in\Lambda$ and $u\in[0,u_\text{max})$, the process $L^{\lambda,u} = (L^{\lambda,u}_t)_{t\in\N}$ defined by
	\begin{equation*}
		L^{\lambda,u}_t
		= e^{u Y_t^\lambda - V_t^\lambda \psi_{\lambda}(u)}
		= \prod_{i=1}^t 
		e^{
		u \left( s(\lambda_0,X_i) - s(\lambda,X_i) \right)
		- v_i^\lambda \psi_\lambda(u)
		}
	\end{equation*}
is an $\Hcal_0$ test supermartingale.
\end{lemma}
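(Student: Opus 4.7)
The plan is to verify the three defining conditions of a test supermartingale for $L^{\lambda,u}$ under each $P \in \Hcal_0$: $\Fcal$-adaptedness, nonnegativity with $L_0^{\lambda,u} \le 1$, and the $P$-supermartingale property. The first two should be straightforward from the definitions; the third will be a direct appeal to Lemma \ref{lem:sub-psi-iff-neg-mean} combined with the characterization of $\Hcal_0$ in \eqref{eq_elicitable_null}.

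First I would check the easy conditions. The processes $Y^\lambda$ and $V^\lambda$ are $\Fcal$-adapted as partial sums of $\Fcal$-measurable increments, so $L^{\lambda,u}$ is $\Fcal$-adapted as a deterministic continuous function of them. Strict positivity follows from $L^{\lambda,u}$ being the exponential of a real-valued random variable, and at $t=0$ both sums are empty so $Y_0^\lambda = V_0^\lambda = 0$ gives $L_0^{\lambda,u} = e^{0} = 1$, which meets the requirement that the initial value be at most $1$.

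Next I would establish the supermartingale property. Fix an arbitrary $P \in \Hcal_0$ and $u \in [0,u_{\max})$. By the characterization \eqref{eq_elicitable_null} of an elicitable hypothesis, the process $Y_t^\lambda = \sum_{i=1}^t \bigl( s(\lambda_0,X_i) - s(\lambda,X_i) \bigr)$ is a $P$-supermartingale for every $\lambda \in \Lambda$. By the hypothesis of the lemma, the pair $(Y^\lambda, V^\lambda)$ is sub-$\psi_\lambda$ under $P$, with $\psi_\lambda$ convex on $[0,u_{\max})$ satisfying $\psi_\lambda(0) = \psi_\lambda'(0) = 0$. I then invoke the ``only if'' direction of Lemma \ref{lem:sub-psi-iff-neg-mean} applied to $(Y^\lambda, V^\lambda)$ under $P$: since $Y^\lambda$ is a $P$-supermartingale and $(Y^\lambda, V^\lambda)$ is sub-$\psi_\lambda$, the process $e^{u Y_t^\lambda - V_t^\lambda \psi_\lambda(u)} = L_t^{\lambda,u}$ is itself a $P$-supermartingale. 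Since $P \in \Hcal_0$ was arbitrary, the supermartingale property holds under every measure in $\Hcal_0$, completing the verification.

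There is no genuine obstacle here, since Lemma \ref{lem:sub-psi-iff-neg-mean} does the heavy lifting. The only subtlety worth flagging in the write-up is that the sub-$\psi$ condition is assumed to hold under every $P \in \Hcal_0$, which is exactly what is required to invoke Lemma \ref{lem:sub-psi-iff-neg-mean} separately under each such $P$; the supermartingale property of $Y^\lambda$ itself under $P$ is not an additional assumption but is already built into the definition \eqref{eq_elicitable_null} of an elicitable hypothesis.
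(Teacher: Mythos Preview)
Your proposal is correct and follows essentially the same approach as the paper: verify nonnegativity and $L_0^{\lambda,u}=1$ by construction, observe that $Y^\lambda$ is a $P$-supermartingale for every $P\in\Hnull$ by the defining property~\eqref{eq_elicitable_null}, and then invoke Lemma~\ref{lem:sub-psi-iff-neg-mean} together with the assumed sub-$\psi$ property to conclude that $e^{uY_t^\lambda - V_t^\lambda\psi_\lambda(u)}$ is a $P$-supermartingale. The paper's write-up is more compressed, but the logical content is identical.
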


\begin{lemma}[Test martingales for sub-\texorpdfstring{$\psi$}{ψ} identifiable hypotheses]
	\label{lem:test-smg-family-subpsi-ident}
Let $T$ be an identifiable functional with identification function $(\lambda,x) \mapsto m(\lambda,x) \in\R^k$, and let $\Hcal_0$ be an identifiable hypothesis of the form~\eqref{eq_identifiable_null}. Let $\Apsi_{m,\lambda_0} \subseteq \R^k$, and define for each $\eta\in\Apsi_{m,\lambda_0}$ the processes $Y_t^\eta = \sum_{i=1}^t \langle \eta \mathrel{,} m(\lambda_0,X_i)\rangle $ and $V_t^\eta = \sum_{i=1}^t v_i^\eta$ for some nonnegative $\Fcal$-adapted process $(v_i^\lambda)_{i\in\N}$. If the family of processes $\left\{\, (Y^\eta,V^\eta) \, \right\}_{ \eta \in \Apsi_{m,\lambda_0}}$ is sub-$\psi$ under every measure in $\Hcal_0$, then for each $\eta\in \Apsi_{m,\lambda_0}$ and $u\in[0,\umax)$ the process $L^{\eta,u} = (L^{\eta,u}_t)_{t\in\N}$ defined by
	\begin{equation*}
		L^{\eta,u}_t = \prod_{i=1}^t e^{ u
		\langle \eta , m(\lambda_0,X_i) \rangle - v_i^\eta \psi_{\eta}(u)
	    }
	\end{equation*}
	is an $\Hcal_0$ test supermartingale.
\end{lemma}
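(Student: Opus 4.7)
The plan is to reduce the claim to Lemma~\ref{lem:sub-psi-iff-neg-mean}, exactly paralleling the strategy one would use for the elicitable case in Lemma~\ref{lem:test-smg-family-subpsi-elic}. First I would fix an arbitrary $P \in \Hcal_0$ and an arbitrary $\eta \in \Apsi_{m,\lambda_0}$ and $u \in [0,\umax)$, and then show that $L^{\eta,u}$ is a nonnegative $P$-supermartingale with $L^{\eta,u}_0 = 1$. Nonnegativity and the initial value are immediate from the exponential form and from $Y^\eta_0 = V^\eta_0 = 0$, so the real content is the $P$-supermartingale property.

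The key observation is that the identifiable hypothesis form \eqref{eq_identifiable_null} says that the $\R^k$-valued process $\sum_{i=1}^t m(\lambda_0, X_i)$ is a $P$-martingale. By linearity of conditional expectation, this forces $\E_P[\langle \eta, m(\lambda_0, X_i)\rangle \mid \Fcal_{i-1}] = \langle \eta, \E_P[m(\lambda_0, X_i)\mid\Fcal_{i-1}]\rangle = 0$ for each $i \in \N$, so that $Y^\eta$ is itself a $P$-martingale. In particular, $\bar Y^\eta_t = 0$ for all $t$ and $Y^\eta$ is trivially a $P$-supermartingale. Given this, the assumption that $(Y^\eta, V^\eta)$ is sub-$\psi_\eta$ under $P$ puts us squarely in the setting of Lemma~\ref{lem:sub-psi-iff-neg-mean}: its conclusion yields that $(e^{u Y^\eta_t - V^\eta_t \psi_\eta(u)})_{t\in\N}$ is a $P$-supermartingale, and this is exactly $L^{\eta,u}$.

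Since $P \in \Hcal_0$ and $\eta \in \Apsi_{m,\lambda_0}$, $u \in [0,\umax)$ were arbitrary, the process $L^{\eta,u}$ qualifies as an $\Hcal_0$ test supermartingale. There is essentially no substantive obstacle: all the work has been absorbed into the sub-$\psi$ hypothesis and into Lemma~\ref{lem:sub-psi-iff-neg-mean}. The only thing to be careful about is the routine check that taking inner products of a vector martingale with a deterministic (constant in $\omega$) vector $\eta$ produces a real-valued martingale, which is immediate but worth stating so that the hypothesis of Lemma~\ref{lem:sub-psi-iff-neg-mean} is visibly satisfied. If one preferred a self-contained argument, one could equivalently skip Lemma~\ref{lem:sub-psi-iff-neg-mean} and invoke Definition~\ref{def:sub-psi-rv} directly, using $\bar Y^\eta_t = 0$ to rewrite $e^{u(Y^\eta_t - \bar Y^\eta_t) - V^\eta_t \psi_\eta(u)}$ as $L^{\eta,u}_t$.
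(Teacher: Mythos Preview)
Your proposal is correct and matches the paper's approach exactly: the paper's proof simply says the argument is analogous to that of Lemma~\ref{lem:test-smg-family-subpsi-elic}, which in turn checks nonnegativity and $L_0=1$ and then invokes Lemma~\ref{lem:sub-psi-iff-neg-mean} after observing that $Y^\eta$ is a $P$-supermartingale under $\Hcal_0$. Your additional remark that $\bar Y^\eta_t = 0$ (so that one could bypass Lemma~\ref{lem:sub-psi-iff-neg-mean} and use Definition~\ref{def:sub-psi-rv} directly) is a valid alternative that the paper does not spell out.
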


The following examples illustrate two situations where test supermartingales can be constructed for unbounded data using sub-$\psi$ assumptions.

%In order to provide the reader with some intuition on the sort of sub-$\psi$ assumptions required for testing elicitable and identifiable hypothesis, we provide some examples pertaining below relating to the statistics displayed in Table~\ref{tab:elicitable-identifiable-quantities}.

% \begin{example}[Sub-$\psi$ mean] \label{ex:scoring-function-mean}
%  	Recall from Table~\ref{tab:elicitable-identifiable-quantities} that a possible scoring function for the mean is $s(\lambda,x)=\frac{1}{2}(x-\lambda)^2$. We fix a value $\lambda_0$ and compute the difference $\Delta(\lambda,x)=s(\lambda_0,x) - s(\lambda,x) = \frac{1}{2}(\lambda_0^2 - \lambda^2) - (\lambda_0-\lambda) \, x$. 
%  	Assuming that $Y_t = \sum_{i=1}^t X_i$ is sub-$\psi$ with variance process $V_t=t$, we have that the family $\{ \sum_{i=1}^t \Delta(\lambda,X_i) \}_{\lambda \in \R}$ is sub-$\tilde\psi$ with $\tilde\psi_{\lambda}(\tilde u) = \psi((\lambda-\lambda_0)\tilde u) - \frac{\tilde u}{2} (\lambda^2 - \lambda_0^2)$ and $V_t=t$. If instead we consider the identification function for the mean, $m(\lambda,x) = x-\lambda$, we find the assumption that $\sum_{i=1}^t X_i$ is sub-$\psi$ with $V_t=t$ implies that condition on $\pm \sum_{i=1}^t m(\lambda_0,X_i)$ is sub-$\tilde{\psi}$ with $\tilde\psi_{\pm}(\tilde u)=\psi(\tilde u) - \tilde u \lambda_0$ and $V_t=t$.
%  \end{example} 

\begin{example}[Sub-$\psi$ mean] \label{ex:scoring-function-mean}
    Recall from Table~\ref{tab:elicitable-identifiable-quantities} that the mean is identifiable with identification function $m(\lambda,x) = x-\lambda$. Now suppose that a real-valued data-generating process $(X_t)_{t\in\N}$ has conditionally sub-Gaussian increments so that $\log \E[ e^{u (X_t - \E[X_t\mid \Fcal_{t-1}] )} \mid \Fcal_{t-1}] \leq \tilde \psi(u)$ for $\tilde \psi(u) = \sigma^2 u^2/2$ and some $\sigma>0$. Under this assumption, we have that $\sum_{i=1}^t \langle z , X_i - \lambda_0 \rangle$ is sub-$\psi$ with $\psi_z (u) = z^2 \tilde{\psi}(u)$ and $V_t = t$, allowing us to apply Lemma~\ref{lem:test-smg-family-subpsi-ident}.
    % On the other hand, if we just assume that $(X_t)_{t\in\N}$ has conditional variance $\bar v_t = \V[X_t\mid\Fcal_{t-1}]$, then
\end{example}

\begin{example}[Sub-$\psi$ regression] \label{ex:sub-psi-ark}
Consider the hypothesis that the data follows an $\mathrm{AR}(k)$ linear time series model, $X_t = \sum_{i=1}^k \beta_i \, X_{t-i} + \epsilon_t$, where $X_t, \epsilon_t \in \R$, $(\epsilon_t)_{t \in \N}$ is a martingale difference sequence where $(\pm \sum_{i=1}^t \epsilon_i)_{t\in\N}$ is sub-$\psi$ with variance process $V_t=t$ and $\beta \in \R^k$ is unknown. We wish to test whether $\beta = \beta_0 \in \R^k$. In each time step, $\beta$ is the value of the identifiable functional $T(P) = \argmin_{\beta} \E_P[ \| \sum_{i=1}^k \beta_i \, X_{t-i} - X_t \|^2 \mid \Fcal_{t-1} ]$. 
% Strictly speaking, the functional is defined on all distributions  the form $\delta_{x_1}\otimes \dots \otimes \delta_{x_k} \otimes Q$ for some probability measure $Q$ on $\R$ with finite second moment. 
In view of the regression example of Table~\ref{tab:elicitable-identifiable-quantities}, this functional has identification function
\begin{equation*}
    m\left(\beta \mathrel{,} (y,\mathbf{x}) \right) = 
	\left( \; 
		\left( \textstyle{\sum_{j=1}^k} \beta_j x_j- y \right) \frac{x_i}{\| \mathbf{x} \|}
	\; \right)_{i=1}^k
\end{equation*}
where $\mathbf{x}=(x_i)_{i=1}^k$ and where the re-scaling by $\nicefrac{1}{\|x\|}$ is possible because $\mathbf{X}_{(t - 1):(t-k)} = (X_{t-i})_{i=1}^k$ is $\Fcal_{t-1}$-measurable. 
% Since the $\epsilon_t$ may be unbounded, we apply our testing methodology in the sub-$\psi$ setting. We will show that a sufficient condition for this identifiable functional to satisfy the necessary sub-$\psi$ condition is that the signed residual processes $ (\pm \sum_{i=1}^t \epsilon_i )_{t\in\N}$ are sub-$\psi$ with variance process $V_t=t$.
In order to apply the testing methodology of Lemma~\ref{lem:test-smg-family-subpsi-ident}, we show that the processes $Y_t^z$ with $Y_{t}^z-Y_{t-1}^z = \langle z, m(\beta_0\mathrel{,}( X_t\mathrel{,}\mathbf{X}_{(t - 1):(t-k)} ) \rangle$
% \begin{align*}
% 	Y_{t}^z-Y_{t-1}^z 
% 	&= \left\langle z, m\left(\beta_0\mathrel{,}( X_t\mathrel{,}\mathbf{X}_{(t - 1):(t-k)} \right) \right\rangle
% 	\\ &= 
% 	(\langle\beta_0 \mathrel{,} \mathbf{X}_{(t - 1):(t-k)}\rangle - X_t ) \frac{\langle z, \mathbf{X}_{(t - 1):(t-k)}\rangle}{\| \mathbf{X}_{(t - 1):(t-k)} \|}
% % 	\\ &=
% % 	(\epsilon_t \, \beta_0 \cdot \mathbf{X}_{(t - 1):(t-k)} ) \frac{z \cdot \mathbf{X}_{(t - 1):(t-k)}}{\| \mathbf{X}_{(t - 1):(t-k)} \|}
% % 	\\ &=
% % 	\left( (\beta_0 - \beta) \mathbf{X}_{(t - 1):(t-k)} - \epsilon_t \right) \, \frac{\langle z, \mathbf{X}_{(t - 1):(t-k)}\rangle}{\| \mathbf{X}_{(t - 1):(t-k)} \|}
% 	=
% 	- \epsilon_t \, \frac{\langle z, \mathbf{X}_{(t - 1):(t-k)}\rangle}{\| \mathbf{X}_{(t - 1):(t-k)} \|}
% \end{align*}
are sub-$\psi_z$ with $V_t^z=t$ for all $z\in\R^d$ with $\|z\|\leq 1$. Computing 
$$\left\langle z \mathrel{,} m\left(\beta_0\mathrel{,}( X_t\mathrel{,}\mathbf{X}_{(t - 1):(t-k)} \right) \right\rangle - \E\left[ \left\langle z \mathrel{,} m\left(\beta_0\mathrel{,}( X_t\mathrel{,}\mathbf{X}_{(t - 1):(t-k)} \right) \right\rangle \middle| \Fcal_{t-1}\right] = c(z) \, \epsilon_t$$
where $c(z) = \langle z, \mathbf{X}_{(t - 1):(t-k)}\rangle/\| \mathbf{X}_{(t - 1):(t-k)} \|$, we see that if $\pm \sum_{i=1}^t \epsilon_i$ are sub-$\psi$ with $V_t=t$, then $Y_{t}^z$ will also be sub-$\psi$ since $\log\E_P\left[\exp( c(z) u \, \epsilon_t )\right] \leq \psi( |c(z)| \, u ) \leq \psi(u) $, which follows due to the fact that $|c(z)|\leq 1$. Hence, a sufficient condition for this identifiable functional to satisfy the necessary sub-$\psi$ condition is simply that the signed residual processes $(\pm \sum_{i=1}^t \epsilon_i)_{t\in\N}$ are sub-$\psi$.

\end{example}

%\begin{remark} \label{rem:ident-vs-convex-subpsi}
There is a relationship between tests for identifiable and elicitable hypotheses with a convex scoring function in the sub-$\psi$ case, analogous to the one pointed out in Remark~\ref{rem:ident-vs-convex-bounded}. Indeed, let $T(\mu)$ be an elicitable functional with a convex scoring function $(\lambda,x) \mapsto s(\lambda,x)$. As pointed out in Remark~\ref{rem:ident-vs-convex-bounded}, $T(\mu)$ is also identifiable with identification function $m(\lambda,x) \in \partial_{\lambda} s(\lambda,x)$. If we assume in addition that for a fixed $\lambda_0\in\Lambda$, the family of processes $\{ (  Y_t^\lambda )_{t\geq 0}  \}_{\lambda\in\Lambda}$ with increments $Y_t^\lambda - Y_{t-1}^\lambda = \langle \lambda_0 - \lambda \mathrel{,} m(\lambda_0,X_t) \rangle$ is sub-$\psi$ with $V_t=t$, then for any $\lambda \in \Lambda$ and $u\in[0,u_{\max})$, the process
\begin{equation*}
    L_t^{\lambda,u} = \prod_{i=1}^t e^{u \, \left\langle \lambda_0 - \lambda \,\mathrel{,}\, m(\lambda_0,X_i)) \right\rangle - \psi_\lambda(u)}
    \;,
\end{equation*}
is a valid test supermartingale according to Lemma~\ref{lem:test-smg-family-subpsi-ident}. However, since $s(\lambda,x)$ is convex and the $\left\langle \lambda_0 - \lambda \mathrel{,} m(\lambda_0,X_i) \right\rangle$ are increments of the sub-$\psi$ process $Y_t^\lambda$, we have that under $\Hcal_0$,
\begin{equation*}
    \E\left[ e^{ u \, \left( s(\lambda_0,X_i) - s(\lambda,X_i) \right) } \middle| \Fcal_{i-1} \right]
    \leq  \label{eq:sub-psi-convexity-inequality}
    \E\left[ e^{ u \, \left\langle \lambda_0 - \lambda \mathrel{,} m(\lambda_0,X_i) \right\rangle } \middle| \Fcal_{i-1} \right]
    \leq
    e^{\psi(u)} \notag
    \;.
\end{equation*}
Hence, the processes
\begin{equation*}
    \tilde{L}_t^{\lambda,u} =
    \prod_{i=1}^t e^{u \, \left( s(\lambda_0,X_i) - s(\lambda,X_i) \right) - \psi_\lambda(u)}
    \;,
\end{equation*}
are valid $\Hcal_0$ test supermartingales which match the form of the test supermartingales presented in Lemma~\ref{lem:test-smg-family-subpsi-elic}.
Due to~\eqref{eq:sub-psi-convexity-inequality}, however, we note that
 $\tilde{L}_t^{\lambda,u} \leq L_t^{\lambda,u}$. Hence, whenever $\lambda \mapsto s(\lambda,x)$ is convex and the families of processes $\left\{ \left( s(\lambda_0,X_i) - s(\lambda,X_i) \right)_{i\in\N} \right\}_{\lambda\in\Lambda}$ and $\left\{ \left( \langle\lambda_0 - \lambda \mathrel{,} m(\lambda_0,X_i) \rangle \right)_{i\in\N} \right\}_{\lambda\in\Lambda}$ are both increments of a sub-$\psi$ process, we find that the tests generated by the identification function $m$ according to Lemma~\ref{lem:test-smg-family-subpsi-ident} will always be more powerful than a test generated by the scoring function $s$ according to Lemma~\ref{lem:test-smg-family-subpsi-elic}, yielding a conclusion analogous to that in Remark~\ref{rem:ident-vs-convex-bounded}.
%\end{remark}

\begin{remark}[Bridging the sub-$\psi$ and bounded test supermartingales] \label{rem:connection-bounded-and-subpsi}
Although the sub-$\psi$ and uniformly bounded hypothesis testing methodologies may appear disjoint, there are in fact some connections which are worth highlighting.

The first and arguably most important remark is that all processes with bounded increments are sub-Gaussian, and hence sub-$\psi$. Indeed, whenever a process $(Y_t)_{t\in\N}$ satisfies $\Delta Y_{t} = Y_{t+1}-Y_{t}\in[a,b]$, it follows by Hoeffding's lemma (see e.g.~\cite{hoeffding1994probability,hertz2020improved}) that
\begin{equation} \label{eq:bounded-rvs-are-subpsi}
	\log \E\left[ e^{ u\,(\Delta Y_t-\E[\Delta Y_t \mid \Fcal_t ]) } \mid \Fcal_t \right] 
% 	\leq - u \, (b-a) \,p + \ln(1 - p + p\, e^{u \, (b-a)})
	\leq \frac{1}{8} u^2 \, (b-a)^2
	\;,
\end{equation}
% where $p=\nicefrac{(\E[Y]-a)}{(b-a)}$, and
where the second expression in the above inequality is the cumulant generating functions of a Gaussian random variable. Hence, we have that processes with bounded increments are sub-$\psi$, where $\psi$ is given by the right-hand side in~\eqref{eq:bounded-rvs-are-subpsi}. 

There exists a deeper connection between the two as follows. Let $Z = (Z_t)_{t\in\N}$ be a supermartingale difference process and let $Y_t = \sum_{i=1}^t Z_i$. Define for each $n\in\N$ and $u\geq0$ the process
%\begin{equation*}
$	M_t^{u,n} = \prod_{i=1}^t \left( 1 + u Z_i/n \right)^n \, e^{-\mu_n(u)}$,
%	\;,
%\end{equation*}
where the collection of compensators $\{\mu_n\}_{n\in\N}$ with $\mu_n:[0,\infty)\to [0,\infty]$ satisfy %\begin{equation*}
$\log \E\left[ \left( 1 + u Z_i/n \right)^n \mid \Fcal_{i-1} \right] \leq \mu_i(u)$
%\end{equation*}
for all $i\in\N$ and $u>0$. Since the function $z \mapsto \left( 1 + z/n \right)^n$ is monotone, nonnegative and convex function on $z\in[-n,\infty)$, we find that whenever $u \, Z_i \geq -n$ for all $i\in\N$, the process $L^{u,n}$ is a nonnegative supermartingale with initial value $L_0^{u,n}=1$.

Whenever $n=1$, we recover a set of processes with multiplicative increments that are linear in the $Z_i$,
%\begin{equation*}
$	M_t^{u,1} = \prod_{i=1}^t \left( 1 + {u \, Z_i} \right) \, e^{-\mu_1(u)}$.
%	\;.
%\end{equation*}
Noting that if $u Z_i > -1$ and $\E\left[Z_i \mid \Fcal_{i-1}\right]\leq 0$, we find that $\mu_1 \equiv 0$ is a valid compensator and $M^{u,1}$, defined in this manner, matches the structure of the uniformly bounded test supermartingales presented in Section~\ref{sec:uniformly-bounded-hypotheses}.
On the other hand, we note that as $n\to\infty$,  $\left( 1 + z/n \right)^n \to e^z$ pointwise, which yields the process
%\begin{equation*}
$	M_t^{u,\infty} = \prod_{i=1}^t e^{u \, Z_i - \mu_{\infty}(u)}$,
%	\;,
%\end{equation*}
revealing a similar structure to the test supermartingales discussed in Section~\ref{sec:sub-psi-hypotheses}. Indeed, whenever the process $Y_i$ is sub-$\psi$ and $u\geq 0$, we find that $L^{u,\infty}$ is a nonnegative supermartingale.
\end{remark}

The families of test supermartingales presented in Lemmas~\ref{lem:test-smg-family-bded-ident} and~\ref{lem:test-smg-family-subpsi-ident} can be thought of as generalizations of the processes presented in~\cite{wau_ram_20} for the purpose of building confidence sequences for means of bounded random variables. In particular, let us consider the functional $T(\mu) = \E_\mu[X]$ with identification function $m(\lambda,x)=\lambda - x$, where the random process $\{X_i\}_{i\in\N}$ is constrained to the interval $[0,1]$. Applying Lemma~\ref{lem:test-smg-family-bded-ident} to this particular setting, we recover the capital process in~\citet[\S4]{wau_ram_20}. Similarly, as noted in Remark~\ref{rem:connection-bounded-and-subpsi}, the process $\sum_{i=1}^t \langle z , m(X_i,\lambda) \rangle$ is sub-$\psi$ for all $\lambda\in [0,1]$ with $\psi_{\lambda}(u) = \frac{1}{8} u^2$ and $V_t = t$, allowing us to recover via Lemma~\ref{lem:test-smg-family-subpsi-ident} the so-called Chernoff and predictably-mixed Chernoff martingales~\citep[\S2.3 \& \S3.1]{wau_ram_20}.

\section{Power via online convex optimization} 
\label{sec:power-via-OCO}

Consider a null hypothesis $\Hnull$ of the form \eqref{eq_sup_mg_hypothesis} given in terms of a family of base test supermartingales $\{ L^\theta \}_{\theta \in \Theta}$. We assume that each $L^\theta$ is of product form,
\begin{equation} \label{eq_L_theta_product_form}
L^\theta_t = \prod_{i=1}^t f_\theta(X_i)
\end{equation}
for some nonnegative function $f_\theta(x)$. This is the case for all the hypotheses considered in Section~\ref{sec_null_hypothesis}. We now address the problem of designing a powerful test supermartingale $W$. As a starting point we take the GRO criterion in \eqref{eq_GROW} and search among processes obtained by predictable mixing as in \eqref{eq:predictably-controlled-test-smg-1}. We must choose a distribution $\hat Q \in \Mcal(\Xcal^\N)$ to bet on. This distribution will not be fully specified ahead of time, but rather learnt adaptively. For $t \in \N$ we set $\hat Q(X_{t+1} \in \cdot \mid \Fcal_t) = (1/t) \sum_{i=1}^t \delta_{X_i}$,
the empirical measure of the data observed so far. Together with $\hat Q(X_1 \in \cdot)$, which we choose arbitrarily, this uniquely specifies the distribution $\hat Q$. The GRO problem \eqref{eq_GROW} at time $T$, restricted to predictable mixtures $W$ as in \eqref{eq:predictably-controlled-test-smg-1}, now takes the form
\begin{equation} \label{eq_GROW_OCO}
\maximize_{\pi_{T+1}} \ \sum_{i=1}^T \log \int_\Theta f_\theta(X_i) \pi_{T+1}(d\theta).
\end{equation}

Let us give some further details for how to get from \eqref{eq_GROW} to \eqref{eq_GROW_OCO}. First of all, the supermartingale constraint in \eqref{eq_GROW} is automatically satisfied since $W$ is a predictable mixture. Next, the form of $W$ and $L^\theta$ imply that $W_{T+1} / W_T = \int_\Theta (L^\theta_{T+1} / L^\theta_T) \pi_{T+1}(d\theta) = \int_\Theta f_\theta(X_{T+1}) \pi_{T+1}(d\theta)$. Finally, because $\hat Q(X_{T+1} \in \cdot \mid \Fcal_T)$ is defined as the empirical measure of $X_1,\ldots,X_T$ it follows that the objective function in \eqref{eq_GROW} is equal to
\[
\E_{\hat Q}\left[ \log \frac{W_{T+1}}{W_T} \mid \Fcal_T \right] = \E_{\hat Q}\left[ \log \int_\Theta f_\theta(X_{T+1}) \pi_{T+1}(d\theta) \mid \Fcal_T \right] = \frac{1}{T} \sum_{i=1}^T \log \int_\Theta f_\theta(X_i) \pi_{T+1}(d\theta).
\]
Since the factor $1/T$ does not affect the optimization, we arrive at \eqref{eq_GROW_OCO}.

Although the maximization problem \eqref{eq_GROW_OCO} is concave in $\pi_{T+1}$, for practical reasons we wish to avoid optimizing over this potentially infinite dimensional quantity. Instead, as discussed in Subsection~\ref{sec:betting-against-the-null}, we restrict the optimization to the much smaller set of `one-asset strategies' of the form $\pi_{T+1} = \delta_{\theta_{T+1}}$ for some $\theta_{T+1} \in \Theta$ that may depend on $X_1,\ldots,X_T$. Doing so simplifies \eqref{eq_GROW_OCO} further to
\begin{equation} \label{eq_GROW_OCO_2}
\maximize_{\theta_{T+1}} \ \sum_{i=1}^T \log f_{\theta_{T+1}}(X_i).
\end{equation}
Due to the product form of $L^\theta$ the objective function is actually of the even simpler form $\log  L^{\theta_{T+1}}_T$. Here, the dependence on the optimization variable $\theta_{T+1}$ is no longer concave in general. Nonetheless, the following lemma shows that in a wide range of examples concavity does, in fact, hold. This will allow us to apply results from Online Convex Optimization, either directly to \eqref{eq_GROW_OCO_2} itself, or to regularized versions of it.

\begin{lemma}
	\label{lem:concavity-test-smg}
	Let $\{L^\theta\}_{\theta\in\Theta}$ satisfy one of the following conditions.
	\begin{enumerate}
		\item There exists a convex set $\Theta\subseteq\Lambda$ such that $\{L^\theta\}_{\theta\in\Theta}$ are test supermartingales for a bounded elicitable hypothesis defined according to Lemma~\ref{lem:test-smg-family-bded-elic}, where the scoring function $\Theta \ni \lambda \mapsto s(\lambda,x)$ is convex for all $x\in\Xcal$.
		\item There exists a convex set $\Theta\subseteq \Abd_{m,\lambda_0}$ such that $\{L^\theta\}_{\theta\in\Theta}$ are test martingales for a bounded identifiable hypothesis defined according to Lemma~\ref{lem:test-smg-family-bded-ident}.
		\item There exists a finite dimensional convex set $\Theta$ and a map 
		\[
		\Theta\ni \theta \mapsto (\lambda(\theta),u(\theta)) \subseteq \Lambda \times [0,\umax)
		\]
		such that the collection $\{L^{\theta}\}_{\theta\in\Theta}=\{ L^{\lambda(\theta),u(\theta)}\}_{\theta\in\Theta}$ are test supermartingales for a sub-$\psi$ elicitable hypothesis defined according to Lemma~\ref{lem:test-smg-family-subpsi-elic} and 
		\[
		\Theta \ni \theta \mapsto u(\theta) \,( s(\lambda_0,x_i) - s(\lambda(\theta),x_i)) - v_i^{\lambda(\theta)} \psi_{\lambda(\theta)}(u(\theta))
		\]
		is almost surely concave.
		\item \label{lem:concavity-test-smg:subpsi-ident}
		There exists a finite-dimensional convex set $\Theta$ and a map 
		\[
		\Theta\ni \theta \mapsto (\eta(\theta),u(\theta)) \subseteq \Apsi_{m,\lambda_0} \times [0,\umax)
		\]
		such that
		$\{L^{\theta}\}_{\theta\in\Theta} = \{ L^{\eta(\theta),u(\theta)}\}_{\theta\in\Theta}$ are test supermartingales for a sub-$\psi$ identifiable hypothesis defined according to Lemma~\ref{lem:test-smg-family-subpsi-ident}, and 
		\[ 
		\Theta \mapsto u(\theta) \, \langle \eta(\theta) \mathrel{,} m(\lambda_0,x_i) \rangle - v_i^{\lambda(\theta)} \psi_{\eta(\theta)}(u(\theta))
		\]
		is almost surely concave.
	\end{enumerate}
	Then for each $t\in\N$, the map $\theta \mapsto \log L_t^\theta$ is concave a.s.
\end{lemma}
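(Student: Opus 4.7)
The plan is to handle the four cases separately, but with the same underlying observation: in each construction the log-likelihood of the base test supermartingale decomposes as a sum over $i=1,\ldots,t$, so it suffices to verify concavity of each individual summand in $\theta$. The overall map $\theta \mapsto \log L_t^\theta$ is then concave as a sum of concave functions, and the ``a.s.'' qualifier simply tracks the fact that the summands depend on the random observations $X_i$.

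For case~(i), I would write $\log L_t^\lambda = \sum_{i=1}^t \log\bigl(1 + s(\lambda_0,X_i) - s(\lambda,X_i)\bigr)$. The argument of the logarithm is strictly positive by the standing assumption $\inf_{\lambda,x}\{s(\lambda_0,x)-s(\lambda,x)\} > -1$ from Lemma~\ref{lem:test-smg-family-bded-elic}, and concave in $\lambda$ because $\lambda \mapsto s(\lambda,X_i)$ is convex. Since $\log$ is concave and nondecreasing on $(0,\infty)$, the composition is concave in $\lambda$. Case~(ii) is even simpler: $\log L_t^\eta = \sum_{i=1}^t \log\bigl(1 + \langle \eta, m(\lambda_0,X_i)\rangle\bigr)$, where the argument is positive by the definition of $\Abd_{m,\lambda_0}$ and affine in $\eta$; log of a positive affine function is concave.

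Cases (iii) and (iv) require even less. Taking logarithms in the product definitions from Lemmas~\ref{lem:test-smg-family-subpsi-elic} and~\ref{lem:test-smg-family-subpsi-ident} yields
\[
\log L_t^\theta = \sum_{i=1}^t \Bigl[\, u(\theta)\bigl(s(\lambda_0,X_i) - s(\lambda(\theta),X_i)\bigr) - v_i^{\lambda(\theta)} \psi_{\lambda(\theta)}(u(\theta))\,\Bigr]
\]
in case~(iii), and the analogous expression with $\langle \eta(\theta), m(\lambda_0,X_i)\rangle$ replacing the score difference in case~(iv). Each summand is concave in $\theta$ by direct hypothesis, so the conclusion follows by summing.

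There is no real obstacle here; the lemma is essentially a bookkeeping statement that packages, in a single place, the concavity properties needed to apply OCO algorithms to the objective in~\eqref{eq_GROW_OCO_2}. The only point worth double-checking is that the ``positivity'' needed to apply $\log$ is indeed part of the construction in cases (i)--(ii), which is why the conditions $\inf\{s(\lambda_0,x)-s(\lambda,x)\} > -1$ and $\eta \in \Abd_{m,\lambda_0}$ were imposed when defining those families in the first place.
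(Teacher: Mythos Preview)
Your proposal is correct and follows essentially the same approach as the paper's own proof: both split into the four cases, decompose $\log L_t^\theta$ as a sum of per-step terms, and then argue concavity of each summand (via the composition ``log of a concave positive function'' in cases~(i)--(ii), and directly from the hypothesis in cases~(iii)--(iv)). If anything, you are slightly more explicit than the paper about why the argument of the logarithm is strictly positive in cases~(i) and~(ii).
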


The proof of the lemma uses the fact that the composition of an increasing concave function with a concave function is concave.

% \begin{remark}
% We note here that Lemma~\ref{lem:concavity-test-smg} shows that test supermartingales generated by identifiable hypotheses are always guaranteed to yield concave log-return maps, whereas elicitable hypotheses require additional assumptions so that the scoring function is convex.
% \end{remark}

\subsection{Regret and asymptotic power}

By repeatedly solving \eqref{eq_GROW_OCO_2} (or a regularized version of it) in each time period, we obtain a predictable sequence $(\theta_t)_{t \in \N}$ which produces the test supermartingale
\begin{equation} \label{eq:predictably-controlled-test-smg-singleasset}
W_t = \prod_{i = 1}^t f_{\theta_i}(X_i).
\end{equation}
A standard way of measuring the quality of the sequence $(\theta_t)_{t \in \N}$ is the \emph{regret}, defined at each time $t$ by
\begin{equation} \label{eq:def-regret}
	\reg_t := 
	\max_{\theta \in \Theta} \left\{ \log L_t^\theta - \log W_t \right\}.
\end{equation}
The regret represents the difference between the log-value of the best retrospectively chosen single-asset portfolio, $\max_{\theta\in\Theta} \log L_t^\theta$, and the given log-wealth $\log W_t$. Various well-known algorithms for solving either \eqref{eq_GROW_OCO_2} or regularized versions of it, yield regret that grows sublinearly,
\[
\reg_T = o(T) \,,
\]
where we emphasize that this holds almost surely, that is,  $\lim_{T\to\infty} \reg_T/T = 0$ a.s.
We review some of these algorithms in Subsection~\ref{sec:oco-algorithms}. The following theorem shows that if regret grows sublinearly, then the test \eqref{eq:test-smg-test} constructed from the test supermartingale $W$ has asymptotic power one.

%Intuitively, any algorithm producing a sequence of $(\theta_i)_{i\in\N}$ such that $\reg_T = o(T)$ is asymptotically performing well, since its average regret per iteration $\nicefrac{\reg_T}{T}$, vanishes in the long-run. This implies that the average log-growth of $V_t$ is asymptotically equal to that of the best retrospectively chosen single-asset portfolio.
%
%
%
%Now, let us assume that an algorithm generates a sequence of controls $\{\theta_t\}_{t\in\N}$ aimed at optimizing the long-term regret of the composite test supermartingale $V$ given by equation~\eqref{eq:predictably-controlled-test-smg-singleasset}. Moreover, let us assume that $\{\theta_t\}_{t\in\N}$ achieves a regret bound of $\reg_T \leq B_T$, where we assume that the regret bound satisfies $B_0=0$, and $B_T=o(T)$. Using the definition of regret in equation~\eqref{eq:def-regret}, we have that for all $\theta^\prime \in \Theta$,
%\begin{equation} \label{eq:regret-test-smg-inequality}
%	\frac{1}{T} \log V_T 
%	\geq
%	\frac{1}{T} \log L_T^{\theta^\prime}
%	- \frac{1}{T} B_T
%	\;,
%\end{equation}
%where right-most term, $\nicefrac{B_T}{T}$, is guaranteed to vanish in the limit by assumption. Using this lower bound on the value process, we arrive at the following theorem.

\begin{theorem}[Sublinear regret implies asymptotic power]
	\label{thm:asymptotic-power-1}
	Let $W=(W_t)_{t\in\N}$ be a predictably mixed test supermartingale process defined according to~\eqref{eq:predictably-controlled-test-smg-singleasset}, generated by a family of test supermartingales $\{L^\theta\}_{\theta\in\Theta}$ and a sequence $(\theta_t)_{t\in\N}$ achieving $\reg_T = o(T)$. Let $\alpha \in (0,1]$, and let $\tau_0$ be the test induced by $W$, that is, $\tau_0 = \inf\{ t\in\N \colon W_t > \alpha^{-1} \}$.  Consider a probability measure $Q\notin\Hnull$ such that there exists $\theta^\prime \in \Theta$ for which 
	\begin{equation}\label{eq:liminf_condition}
	    Q\left( \liminf_{T\to\infty}\frac{\log L_T^{\theta^\prime}}{T} > 0 \right) = 1.
	\end{equation}
	Then
	\begin{equation*}
		Q\left(\text{$\tau_0$ rejects $\Hnull$}\right) = Q(\tau_0 < \infty)= 
		Q\left( 
		\lim_{T\to\infty} \max_{0\leq t\leq T} W_t > \alpha^{-1}
		\right) = 1,
	\end{equation*}
	that is, $\tau_0$ eventually rejects $\Hnull$ with probability one.
\end{theorem}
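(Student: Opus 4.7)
The plan is to argue directly from the definition of regret that $\log W_T$ grows at least linearly in $T$ under $Q$, so $W_T \to \infty$ and thus the stopping time $\tau$ must be finite $Q$-almost surely.

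First, from the definition of regret in~\eqref{eq:def-regret} applied to the particular $\theta' \in \Theta$ whose existence is guaranteed by the hypothesis, we have the pathwise inequality
\[
\log W_T \;\geq\; \log L_T^{\theta'} - \reg_T
\]
for every $T \in \N$. Dividing by $T$ gives
\[
\frac{\log W_T}{T} \;\geq\; \frac{\log L_T^{\theta'}}{T} - \frac{\reg_T}{T}.
\]

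Second, I take the $\liminf$ as $T \to \infty$ on both sides, working under the measure $Q$. The assumption on the regret, $\reg_T = o(T)$ almost surely, is pathwise in nature (OCO regret bounds are worst-case over data sequences, so they hold under any probability measure including $Q$), hence $\reg_T/T \to 0$ holds $Q$-a.s. Combined with assumption~\eqref{eq:liminf_condition} I obtain
\[
\liminf_{T\to\infty} \frac{\log W_T}{T} \;\geq\; \liminf_{T\to\infty} \frac{\log L_T^{\theta'}}{T} \;>\; 0 \qquad Q\text{-a.s.}
\]

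Third, this lower bound on the growth rate of $\log W_T$ forces $\log W_T \to \infty$ and therefore $W_T \to \infty$ $Q$-almost surely. In particular $\sup_{t \in \N} W_t = \infty$ $Q$-a.s., so for every fixed $\alpha \in (0,1]$ we have $\sup_{t \in \N} W_t > \alpha^{-1}$ $Q$-a.s. By definition of $\tau$ this yields $\tau < \infty$ $Q$-a.s., which is exactly the conclusion.

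The only real subtlety — and what I expect to be the main point that needs care in a rigorous writeup — is the status of the regret bound $\reg_T = o(T)$ under the measure $Q$. For the standard OCO algorithms reviewed in Subsection~\ref{sec:oco-algorithms} this is a deterministic, sequence-by-sequence bound on the trajectory of the updates $(\theta_t)_{t\in\N}$ in response to the observed data, so it transfers from the null to any alternative distribution without modification. Once that is noted, the remainder of the proof is the short chain of inequalities above; no martingale arguments or Ville-type estimates are needed here, since we are lower-bounding $W_T$ rather than upper-bounding it.
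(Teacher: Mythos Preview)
Your proof is correct and follows essentially the same route as the paper's: use the regret definition to lower-bound $\tfrac{1}{T}\log W_T$ by $\tfrac{1}{T}\log L_T^{\theta'} - \tfrac{1}{T}\reg_T$, take $\liminf$, and conclude $W_T \to \infty$ $Q$-a.s. Your explicit remark that the OCO regret bounds are pathwise (and hence valid under $Q$) is a useful clarification that the paper leaves implicit in the proof itself.
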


% \begin{remark}
%     The proof of Theorem~\ref{thm:asymptotic-power-1} provides us with a lower bound on the number of samples required to reject $\Hcal_0$ in the worst case. In particular, if we define $\bar{\ell} = \sup_{\theta\in\Theta} \liminf_{T\to \infty} \log L_T^\theta/T$, we obtain
%     \begin{equation*}
%         \tau \geq \inf \left\{
%             T\in\N^+ \mathrel{\colon}
%             T \bar{\ell} - \reg_T > - \log \alpha
%         \right\}
%     \end{equation*}
%     as the minimum number of samples required to reject $\Hcal_0$, where this quantity is finite whenever the conditions of Theorem~\ref{thm:asymptotic-power-1} are met.
% \todoPC{Give a simple example in which we can compute $\bar{\ell}$ or verify to make sure is true}
% \end{remark}

Each of the algorithms presented in Subsection~\ref{sec:oco-algorithms} achieve sublinear regret, as required in the statement of Theorem~\ref{thm:asymptotic-power-1}. The second assumption in Theorem \ref{thm:asymptotic-power-1} that must be met to achieve asymptotic power one, is the existence of a $\theta^\prime$ that satisfies \eqref{eq:liminf_condition}. In the next lemma, we show that there are easily verifiable sufficient conditions to guarantee this condition when data is generated by a stationary ergodic process.

\begin{proposition} \label{prop:stationary-ergodic-asymtotic-power-1}
	Suppose that for $Q\notin\Hcal_0$, the data-generating process $(X_t)_{t\in\N}$ is stationary and ergodic with invariant measure $Q_{\infty}$. Let $\{L_t^\theta\}_{\theta\in\Theta}$ be a collection of $\Hnull$ test supermartingales where one of the following conditions holds.
	\begin{enumerate}
		\item $\Hnull$ is a uniformly bounded elicitable hypothesis with $\{L_t^\theta\}_{\theta\in\Theta}$ defined according to Lemma~\ref{lem:test-smg-family-bded-elic} where $\Theta\subseteq\Lambda$ and there exists $\lambda^\prime \in \Theta$ such that $\E_{Q_{\infty}} [\log(1 + s(\lambda_0,X_{\infty}) - s(\lambda^\prime,X_{\infty}) )] > 0$.
		\item $\Hnull$ is a uniformly bounded identifiable hypothesis with $\{L_t^\theta\}_{\theta\in\Theta}$ defined according to Lemma~\ref{lem:test-smg-family-bded-ident} where $\Theta\subseteq \Abd_{m,\lambda_0}$ and there exists $\eta^\prime\in\Theta$ such that $\E_{Q_{\infty}} [\log(1 + \langle \eta^\prime , m(\lambda_0,X_{\infty}) \rangle )] > 0$.
		\item $\Hnull$ is a sub-$\psi$ elicitable hypothesis with $\{L_t^\theta\}_{\theta\in\Theta}$ defined according to Lemma~\ref{lem:test-smg-family-subpsi-elic} where $\Theta = \Theta^\prime \times [0,\epsilon)$ in which $0<\epsilon\leq\umax$, $\Theta^\prime \subseteq \Lambda$, $\lim_{t\to\infty} \tfrac{1}{t} \sum_{i=1}^t v_i^\lambda < \infty$ a.s. for all $\lambda\in\Lambda$, and there exists $\lambda^\prime \in \Theta^\prime$ such that $\E_{Q_{\infty}} [s(\lambda_0,X_{\infty}) - s(\lambda^\prime,X_{\infty})] > 0$.
		\item $\Hnull$ is a sub-$\psi$ identifiable hypothesis with $\{L_t^\theta\}_{\theta\in\Theta}$ defined according to Lemma~\ref{lem:test-smg-family-subpsi-ident} where $\Theta = \Theta^\prime \times [0,\epsilon)$ in which $0<\epsilon\leq\umax$, $\Theta^\prime \subseteq \Apsi_{m,\lambda_0}$, $\lim_{t\to\infty} \tfrac{1}{t} \sum_{i=1}^t v_i^\eta < \infty$ a.s. for all $\eta\in\Apsi_{m,\lambda_0}$ , and there exists $\eta \in\Theta^\prime$ such that $\E_{Q_{\infty}} [\langle \eta \mathrel{,} m(\lambda_0,X_{\infty}) \rangle] > 0$.
	\end{enumerate}
	Then there exists $\theta^\prime \in \Theta$ for which \eqref{eq:liminf_condition} holds.
\end{proposition}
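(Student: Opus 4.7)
The plan is to invoke Birkhoff's ergodic theorem on $\frac{1}{T}\log L_T^{\theta^\prime}$ for a judiciously chosen $\theta^\prime$. Since $(X_t)_{t\in\N}$ is stationary and ergodic under $Q$ with invariant measure $Q_\infty$, Birkhoff yields that time averages of integrable stationary functionals of the data converge $Q$-almost surely to the corresponding $Q_\infty$-expectations. Condition \eqref{eq:liminf_condition} then reduces to showing this limit is strictly positive, which is precisely what the positivity assumption in each of (i)--(iv) delivers.

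For cases (i) and (ii) I would set $\theta^\prime=\lambda^\prime$ (respectively $\eta^\prime$) directly from the hypothesis. Writing
\[
\log L_T^{\lambda^\prime} = \sum_{i=1}^T \log\bigl(1 + s(\lambda_0,X_i) - s(\lambda^\prime,X_i)\bigr),
\]
each summand is bounded (hence $Q_\infty$-integrable): the lower bound on the score difference in Lemma~\ref{lem:test-smg-family-bded-elic} keeps the argument of $\log$ bounded away from $0$, while the uniform boundedness of the score difference bounds it from above. Birkhoff then gives $\frac{1}{T}\log L_T^{\lambda^\prime}\to \E_{Q_\infty}[\log(1+s(\lambda_0,X_\infty)-s(\lambda^\prime,X_\infty))]>0$ $Q$-a.s., which is \eqref{eq:liminf_condition}. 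Case (ii) is identical, replacing the score difference by $\langle\eta^\prime,m(\lambda_0,\cdot)\rangle$, whose boundedness is ensured by $\eta^\prime\in\Abd_{m,\lambda_0}$ together with the uniform boundedness of $m(\lambda_0,\cdot)$ (Lemma~\ref{lem:test-smg-family-bded-ident}).

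For the sub-$\psi$ cases (iii) and (iv), the parameter $\theta^\prime=(\lambda^\prime,u)$ (respectively $(\eta^\prime,u)$) must additionally include a scalar $u\in[0,\epsilon)$. Fixing $\lambda^\prime$ from the hypothesis,
\[
\frac{1}{T}\log L_T^{\lambda^\prime,u} = u\cdot\frac{1}{T}\sum_{i=1}^T\bigl(s(\lambda_0,X_i)-s(\lambda^\prime,X_i)\bigr) - \psi_{\lambda^\prime}(u)\cdot\frac{1}{T}\sum_{i=1}^T v_i^{\lambda^\prime}.
\]
Birkhoff shows the first average converges $Q$-a.s.\ to $a:=\E_{Q_\infty}[s(\lambda_0,X_\infty)-s(\lambda^\prime,X_\infty)]>0$, while the second converges $Q$-a.s.\ to a finite limit $c\ge 0$ by assumption. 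Since $\psi_{\lambda^\prime}(0)=\psi_{\lambda^\prime}^\prime(0)=0$, $\psi_{\lambda^\prime}(u)/u\to 0$ as $u\downarrow 0$; hence for a small enough deterministic $u\in(0,\epsilon)$ one has $ua-\psi_{\lambda^\prime}(u)c>0$, yielding \eqref{eq:liminf_condition}. Case (iv) is treated identically, with the score difference replaced by $\langle\eta^\prime,m(\lambda_0,\cdot)\rangle$.

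The main subtlety is that, in cases (iii) and (iv), $u$ must be chosen deterministically and work for $Q$-a.e.\ $\omega$, which requires the limit $c$ not to depend on $\omega$. This constancy follows from ergodicity: since $(v_i^{\lambda^\prime})_{i\in\N}$ is a stationary functional of $(X_t)$, a second application of Birkhoff delivers an a.s.\ constant limit, so a single deterministic $u$ can be selected to close the argument.
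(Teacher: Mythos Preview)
Your proposal is correct and follows essentially the same route as the paper's proof: apply Birkhoff's ergodic theorem to $\tfrac{1}{T}\log L_T^{\theta'}$, and in the sub-$\psi$ cases choose $u$ small enough that $ua-\psi_{\lambda'}(u)c>0$ using $\psi_{\lambda'}(0)=\psi_{\lambda'}'(0)=0$. Your treatment is in fact slightly more careful than the paper's in that you explicitly address integrability for Birkhoff in cases (i)--(ii) and flag the need for the limit $c=\lim\tfrac{1}{T}\sum v_i^{\lambda'}$ to be $\omega$-independent so that a single deterministic $u$ works---a point the paper's proof passes over silently (and which your stationarity justification for $c$ does not fully resolve either, since adaptedness of $(v_i)$ alone does not imply it is a stationary functional of $(X_t)$).
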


We remark here that two common cases in which Proposition~\ref{prop:stationary-ergodic-asymtotic-power-1} may apply are when $(X_t)_{t \in \N}$ is either i.i.d.\ or generated by a stationary, irreducible and aperiodic Markov chain. Moreover, conditions \emph{(iii)} and \emph{(iv)} of Proposition~\ref{prop:stationary-ergodic-asymtotic-power-1} hold for any $Q\notin\Hnull$, which can be seen by inspecting~\eqref{eq_elicitable_null} or~\eqref{eq_identifiable_null}. 

To conclude, Theorem~\ref{thm:asymptotic-power-1} can be interpreted as a high-level result stating that whenever there are alternative hypotheses which offer `sufficient evidence against the null', regret-optimal strategies will asymptotically reject the null with probability one.  Proposition~\ref{prop:stationary-ergodic-asymtotic-power-1} quantifies the notion of `sufficient evidence against the null' in the case of a stationary ergodic data generating process.

Regret bounds have been used by various authors outside the context of elicitable and identifiable functionals as a way to ensure the power of statistical inference procedures based on betting. See for example \cite{jun2019parameter,https://doi.org/10.48550/arxiv.2112.09162,MR4364897,https://doi.org/10.48550/arxiv.2110.14099}. The latter reference in particular contains a literature overview as well as links to online learning. Further historical remarks can be found in Appendix~F of \cite{wau_ram_20}.

\subsection{Online Convex Optimization Algorithms} \label{sec:oco-algorithms}

We summarize a few simple but effective OCO algorithms that can be applied to optimizing the predictably mixed test supermartingale $V = (V_t)_{t\in\N}$ by selecting a sequence of single-asset portfolios given by $(\theta_t)_{t\in\N}$ which maximize the growth rate of~\eqref{eq:predictably-controlled-test-smg-singleasset}. We point interested readers to~\cite{hazan2016introduction,shalev2011online} for a comprehensive introduction to OCO.

For the remainder of the section, we assume that for all $t\in\N$, the maps of forward differences defined on a convex set $\Theta$,
\[
\theta \mapsto \Delta \log L_{t-1}^\theta 
% = \log\left(\frac{L_{t}^\theta}{L_{t-1}^\theta}\right) 
= \log L_{t}^\theta - \log L_{t-1}^\theta
\]
are concave. We say that a convex function $f:\Theta\rightarrow\R$ is strongly convex with parameter $\mu>0$ whenever $f(\theta_1) - f(\theta_0) - \langle \nu_0 , \theta_1 - \theta_0\rangle \geq \mu \| \theta_1 - \theta_0 \|^2$ for all $\theta_0,\theta_1\in\Theta$, where $\nu_0 \in \partial_{\theta} f(\theta_0)$ is an element the subgradient of $f$ at $\theta_0$. Similarly, $f$ is strongly concave whenever $-f$ is strongly convex. We define the norm of the subgradient of a convex function $f:\Theta\rightarrow\R$ as $\|\partial_\theta f(\theta)\| = \sup_{\nu\in\partial_\theta f(\theta)} \|\nu\|$.\\

\noindent\textbf{Follow The Leader.} We define the Follow The Leader (FTL) algorithm as choosing at each iteration $t+1$
\begin{equation} \label{eq:ftl-definition}
	\theta_{t+1} \in \argmin_{\theta\in\Theta} -\log L_t^\theta
	= \argmin_{\theta\in\Theta} -\sum_{1\leq i \leq t} \Delta \log  L_{i-1}^\theta
	\tag{FTL}
	\;.
\end{equation}
Hence, at each iteration, \eqref{eq:ftl-definition} picks $\theta\in\Theta$ such that $\log L_t^\theta$ has the largest average growth rate in hindsight. Implicitly, this algorithm assumes that computing the $\argmax$ at each round can be done relatively easily. Under the additional assumptions that (i) $\sup_{\theta\in\Theta}\| \partial_{\theta} (-\Delta \log L_{t-1}^\theta ) \| \leq G < \infty$ and  (ii) the $\Delta \log {L_{t}^\theta}$ are strongly concave, we have that $\reg_T \leq \frac{G^2}{2}(1 + \log T)$. We point the reader to~\citet[\S 3.6]{mcmahan2017survey} for a derivation of this bound. This algorithm is not recommended for the case when $\Delta \log L_t^\theta $ are not strongly concave. Indeed, it is possible to create counterexamples, such as those presented in \citet[Example 2.2]{shalev2011online} or~\citet[Page 65]{hazan2016introduction}, of non strongly-concave functions where the algorithm induces super-linear growth in the regret. The bounded gradient assumption (i) will be satisfied by, for example, Lipschitz continuous functions.\\

\noindent\textbf{Follow The Regularized Leader.} A simple fix for this problem leads to the second algorithm, which involves the inclusion of regularization, increasing the algorithm's stability. The most obvious implementation of this concept is the Follow The Regularized Leader (FTRL) algorithm, in which we introduce regularization terms to the optimization problem in~\eqref{eq:ftl-definition}. Precisely, at each iteration $t+1$, FTRL selects
\begin{equation} \label{eq:ftrl-definition}
	\theta_{t+1} \in \argmin_{\theta\in\Theta} 
	\left\{
	-\log L_t^\theta +
	\sum_{1\leq i \le t} r_{i-1}(\theta)
	\right\}
	\tag{FTRL}
	\;,
\end{equation}
where the $\{ r_t \}_{t\in\N}$ are a sequence of strongly convex functions for which either (a) $x_0 = \argmin_{\theta\in\Theta} r_t(\theta)$ for all $t$, in which the algorithm is the \emph{FTRL-Centered} variant or (b) for which we assume that $x_t = \argmin_{\theta\in\Theta} r_t(\theta)$ where this variant is named \emph{FTRL-Proximal}. Under the additional assumptions that (i) $\sup_{\theta\in\Theta}\|  \partial_{\theta} (-\Delta \log L_{t-1}^\theta  ) \| \leq G < \infty$ and that (ii) $\diam(\Theta) = \sup_{x,y\in\Theta} \|x-y\| < \infty$, one can devise a sequence of centered or proximal regularizers $\{r_t\}_{t \in \N}$ such that the algorithms enjoy the regret bound $\reg_T = O(\sqrt{T})$. We point the reader to~\citet[Section 3]{mcmahan2017survey} for a summary of the exact conditions and specific bounds. \\

\noindent\textbf{Online Gradient Descent.} The $\argmin$ expressions in~\eqref{eq:ftl-definition} and~\eqref{eq:ftrl-definition} may be difficult to compute directly introducing problems in the implementation of FTL or FTRL. Instead, using gradients collected in each step, one can follow the direction of steepest ascent of $\Delta \log L_{t-1}^\theta$ at each iteration. This produces the Online Gradient Descent (OGD) algorithm, where we choose at each iteration
\begin{equation} \label{eq:online-gradient-descent-def}
	\theta_{t+1} = \Pi_{\Theta}\left\{
	\theta_{t} - \eta_t \nu_t
	\right\}
	\text{ where }
	\nu_t \in \partial_\theta \left( - \Delta \log {L_{t-1}^\theta}\right)
	\tag{OGD}
	\;,
\end{equation}
where $\{\eta_t\}_{t\in\N}$ is a sequence of positive learning rates and $\Pi_{\Theta}(x) = \argmin_{y\in\Theta} \|y-x\|$ is the projection operator. This algorithm has the advantage that it is extremely simple to compute, provided that gradients are available at each step. For an appropriate choice of $\{\eta_t\}_{t\in\N}$, and under the additional assumption that (i) $\sup_{\theta\in\Theta}\|  \partial_{\theta} (- \Delta \log\left( L_{t-1}^\theta \right) ) \| \leq G < \infty$ and (ii) $\diam(\Theta)<\infty$, the algorithm has a regret bound of $\reg_T=O(\sqrt{T})$. We point the reader to~\cite{zinkevich2003online} and \citet[\S3.1]{mcmahan2017survey} for the specific bounds and conditions. 

We summarize the bounds and assumptions for the OCO algorithms in Table~\ref{tab:oco-alg-summary}, where we emphasize that the growth rate of the regret in the right-hand column holds almost surely regardless of the data-generating measure. Beyond the algorithms presented here, there exist a plethora of online optimization algorithms which may leverage the geometry of the index set $\Theta$ or past information about gradients in order to improve rates of convergence. For a broader survey of available OCO algorithms, we point the reader to \cite{hazan2016introduction,shalev2011online,mcmahan2017survey}. We also point out that although the algorithms here offer guarantees for convex loss functions, they can in principle be used on non-convex optimization problems as well. However, while still yielding valid tests, the worst-case guarantees may no longer hold.

%\begin{remark}
The use of OCO methods for building more powerful predictably mixed test supermartingales is related to various online methods presented in~\cite{wau_ram_20} for confidence sequence building for means of bounded random variables. In particular, Kelly betting~\cite[\S5.2,5.3,5.6]{wau_ram_20} can be interpreted as a variation of the \ref{eq:ftl-definition}~algorithm to a family of processes of the form of Lemma~\ref{lem:test-smg-family-bded-ident} for bounded identifiable functionals. Similarly, the Online Newton Step Algorithm discussed in~\citet[\S5.5]{wau_ram_20} can be thought of another method in the family of OCO algorithms. %applied to families of processes such as those in Lemma~\ref{lem:test-smg-family-bded-ident}. 
The authors of \cite{wau_ram_20} acknowledge and elaborate on the connection between their confidence sequence building methods with OCO and coin-betting algorithms in~\citet[Appendix~D]{wau_ram_20} but they do not use regret bounds to derive asymptotic power guarantees.
%\end{remark}

%\begin{remark}
 Although largely ignored over the course of this section, another important aspect to consider when selecting an algorithm for online convex optimization is the computational complexity of the algorithm. For example, it is typically the case that~\eqref{eq:ftl-definition},~\eqref{eq:ftrl-definition} or other loss-minimization based algorithms will be run slower than gradient-based algorithms such as~\eqref{eq:online-gradient-descent-def}. On the other hand, although~\eqref{eq:online-gradient-descent-def} and~\eqref{eq:ftrl-definition} have similar asymptotic performance, we typically find that~\eqref{eq:ftrl-definition} may perform better on average, which can be reflected in the constants associated with their regret bounds, see for example \cite{mcmahan2017survey}. 
%\end{remark}

% \begin{table}[ht]
% \centering
% \begin{tabular}{c|l|l|l}
% \cline{2-3}
% \multicolumn{1}{l|}{} & \multicolumn{2}{c|}{\textbf{Assumptions}} &  \\ \hline
% \multicolumn{1}{|c|}{\textbf{Algorithm}} & \multicolumn{1}{c|}{$-\Delta \log V_i(\theta)$} & \multicolumn{1}{c|}{$\text{diam}(\Theta)$} & \multicolumn{1}{l|}{$\reg_T$} \\ \hline
% \multicolumn{1}{|c|}{\ref{eq:ftl-definition}} & strongly-convex + bounded gradients & any & \multicolumn{1}{l|}{$\Ocal(\log T)$} \\ \hline
% \multicolumn{1}{|c|}{\ref{eq:ftrl-definition}} & bounded gradients & $\leq R$ & \multicolumn{1}{l|}{$\Ocal(\sqrt{T})$} \\ \hline
% \multicolumn{1}{|c|}{\ref{eq:online-gradient-descent-def}} & bounded gradients & $\ leq R$ & \multicolumn{1}{l|}{$\Ocal(\sqrt{T})$} \\ \hline
% \end{tabular}
% \caption{Summary of assumptions and regret bounds for the OCO algorithms presented in Section~\ref{sec:oco-algorithms}}
% \label{tab:oco-alg-summary}
% \end{table}

\begin{table}[]
\centering
\begin{tabular}{@{}clll@{}}
\cmidrule(lr){2-3}
\multicolumn{1}{l}{} & \multicolumn{2}{c}{\textbf{Assumptions}} &  \\ \midrule
\textbf{Algorithm} & \multicolumn{1}{c}{$-\Delta \log W_t(\theta)$} & \multicolumn{1}{c}{$\text{diam}(\Theta)$} & $\reg_T$ \\ \midrule
\textbf{\ref{eq:ftl-definition}} & strongly-convex + bounded gradients & any & $O(\log T)$ \\
\textbf{\ref{eq:ftrl-definition}} & bounded gradients & $\leq R$ & $O(\sqrt{T})$ \\
\textbf{\ref{eq:online-gradient-descent-def}} & bounded gradients & $\leq R$ & $O(\sqrt{T})$
\end{tabular}
\caption{Summary of assumptions and regret bounds for the OCO algorithms presented in Section~\ref{sec:oco-algorithms}.}
\label{tab:oco-alg-summary}
\end{table}

%\subsection{Generalizations and Predictive Measures}

Although the OCO algorithms presented in Section~\ref{sec:oco-algorithms} offer regret guarantees which translate into asymptotic power, it is possible to generalize these methods in order to further improve performance in special cases. The algorithms~\ref{eq:ftl-definition},~\ref{eq:ftrl-definition} and~\ref{eq:online-gradient-descent-def}, attempt to maximize some version of the data-generated objective function
\begin{equation} \label{eq:empirical-log-return-objective}
	 \theta \mapsto \frac{1}{t}\log L_t^\theta	 = 
	 \frac{1}{t}\sum_{i=1}^t \Delta \log L^\theta(X_i)
	 = \E_{\hat P_t}\left[ \Delta \log L^\theta(X) \right]
	 \;,
\end{equation} 
where we write $\Delta \log L_i^\theta = \Delta \log L^\theta(X_i)$ as an explicit function of the data point $X_i$, and where $\E_{\hat P_t}$ represents the expected value with respect to the empirical measure $\hat P_t = (1/t)\sum_{i=1}^t \delta_{X_i}$. 

If additional distributional information is known about $\{X_t\}_{t\in\N}$, one could replace the empirical distribution $\hat P_t$ with a predictive measure $\tilde P_t$ which may better represent the true data distribution. Modifying~\eqref{eq:ftl-definition} with this new measure, one obtains the algorithm
\begin{equation}
	\theta_{t+1} \in \argmin_{\theta\in\Theta} \E_{\tilde P_t}\left[ \log \Delta L^\theta(X) \right]
	\tag{FTLP}
	\;,
\end{equation}
and the same principle can be applied to modify both, \eqref{eq:ftrl-definition} and~\eqref{eq:online-gradient-descent-def}. The fact that we have complete freedom in choosing $\tilde P_{t}$ at each step can be a major advantage of the betting approach to sequential testing, as has been discussed in detail elsewhere; see \cite{MR4364897,wau_ram_20}.

\section{Confidence Sequences and Inverting Tests}
\label{sec:confidence-sequences}

Our focus has been on methodology for testing elicitable and identifiable hypotheses in a sequential setting. However, the techniques we have developed allow for the construction of confidence sequences. Given a functional $T(P):\Mcal_1(\Xcal)\to\Lambda\subseteq \R^d$ define a \emph{confidence sequence} $\{C_t\}_{t\in\N}$ \emph{at level} $\alpha\in(0,1)$ as a sequence $(C_t)_{t \in \N}$ of confidence sets $C_t\subseteq\Lambda$ for all $t\in\N$, satisfying the property that
\begin{equation} \label{eq:anytime-valid-confidence-sequence}
	\sup_{P\in\Hnull(\lambda_0)} P\left( \forall t\in\N \colon \lambda_0 \notin C_t \right) \leq \alpha
	\;,
\end{equation}
for any $\lambda_0\in\Lambda$. Here, $\Hnull(\lambda_0)$ denotes the null hypothesis at \eqref{eq_null_seq_T} where we now emphasize the dependence on $\lambda_0$ in the notation.
Equation~\eqref{eq:anytime-valid-confidence-sequence} can be interpreted as guaranteeing that, with high probability, the true value of the functional, $\lambda_0 \in T(P)$ is contained within, not only a single confidence set, but the entire sequence of confidence sets at once.

Using the sequential hypothesis testing methodology, we can construct such confidence sequences for the
%types of 
elicitable and identifiable %hypotheses previously defined. %Indeed, for each $\lambda_0\in\Lambda$ define the map $\Lambda \ni \lambda_0 \mapsto \Hnull(\lambda_0)$, where either
%\begin{equation*}
%\Hnull(\lambda_0) = \left\{ P \in \Mcal_1(\Xcal^\N)\colon \sum_{i=1}^t \left( s(\lambda_0, x_i) - s(\lambda, x_i) \right) \text{ is a $P$-supermartingale for all } \lambda \in \Lambda \right\}
%\end{equation*}
%in the case where $T$ is elicitable, %or
%\begin{equation*}
%\Hnull(\lambda_0) = \left\{ P \in \Mcal_1(\Xcal^\N)\colon \sum_{i=1}^t m(\lambda_0, x_i) \text{ is a $P$-martingale} \right\}
%\end{equation*}
%in the case where $T$ is identifiable. We note that these are maps to 
martingale hypotheses of the form~\eqref{eq_elicitable_null} and~\eqref{eq_identifiable_null}. %which were the focus of the testing setting, but where the main change lies in that they are now parametric.
Assume that for each $\lambda_0\in\Lambda$ there exists an $\Hnull(\lambda_0)$ test supermartingale denoted by $W^{\lambda_0} = (W_t^{\lambda_0})_{t\in\N}$. Each $W^{\lambda_0}$ induces a sequential test for $\Hnull(\lambda_0)$, so we may construct a confidence sequence $C=(C_t)_{t\in\N}$ by `inverting' these tests as follows. For each $t\in\N$, we define
\begin{equation} \label{eq:anytime-valid-ci-definition}
	C_t = \left\{ \lambda\in\Lambda \colon \max_{0\leq i \leq t} V_i^{\lambda} \leq \alpha^{-1} \right\}
	\;,
\end{equation}
the set of $\lambda$ whose associated null hypotheses have not yet been rejected by the tests induced by the associated $W^{\lambda_0}$. This construction produces a confidence sequence since
%%\begin{align*}
$	\sup_{P\in\Hnull(\lambda_1)}
	P(\exists t \, \text{ s.t.\ } \colon \lambda_1 \notin C_t )
	=
	\sup_{P\in\Hnull(\lambda_1)}
	P( \max_{t\in\N} W_t^{\lambda_1} > \alpha^{-1} ) \leq \alpha$.
%	\;.
%\end{align*}

We may apply the techniques from previous sections to construct confidence sequences for elicitable and identifiable functionals as follows. Depending on whether the functional $T$ in question is elicitable or identifiable and on whether it is \emph{uniformly bounded} or \emph{sub-$\psi$}, for each $\lambda_0\in\Lambda_0$ we construct a family of test supermartingales according to Lemmas~\ref{lem:test-smg-family-bded-elic},~\ref{lem:test-smg-family-bded-ident},~\ref{lem:test-smg-family-subpsi-elic} or~\ref{lem:test-smg-family-subpsi-ident}, which we use to construct a mixture test supermartingale $W^{\lambda_0}$ according to Lemma~\ref{lem:predictably-controlled-test-smg}. In Appendix~\ref{sec:one-sided-set-valued-hypotheses}, we show how this methodology can be extended for the purpose of testing one-sided and set-valued hypotheses.
% Using these processes, we construct the confidence sequence according to equation~\eqref{eq:anytime-valid-ci-definition}.

\section{Numerical Examples} \label{sec:simulation_study}

%\subsection{Bounded, Independent Data}

We begin by applying our tests to sequences of independent and identically distributed data.
Let $\{X_t\}_{t\in\N} \sim \text{ i.i.d. } \text{Beta}(\alpha,\beta)$, where $\alpha=2$ and $\beta=5$ are the parameters of the beta distribution. %We note here that this data generating process is bounded, which will allow us to apply tests for uniformly bounded elicitable and identifiable hypotheses. 

Our first experiment will be to test the mean and standard deviation of this data generating process, simultaneously. That is, we consider a functional $T:\Mcal_1(\Xcal)\to \R^2$ where
%\begin{equation}
%    \label{eq:mean-sd-functional}
$	T(Q) = %\begin{pmatrix}
		(\E_Q[ X_t ],
		\sqrt{\V_Q[X_t]})'$.
%	\end{pmatrix}.
%\end{equation}
Under the assumed ground truth data generating measure $P_{\text{GT}}$, the value of this functional is approximately $T(P_{\text{GT}})\approx(0.3, 0.16)'$. First, we are interested in testing the null hypothesis
\begin{equation}
    \label{eq:mean-sd-hypothesis}
	\Hnull = \left\{
		P \in \Mcal_1(\Xcal^\N)\,:\,
		T(P(X_t \in \cdot \mid \Fcal_{t-1}))=\left( \begin{smallmatrix} 0.4 \\ 0.4 \end{smallmatrix}\right) \text{ for all } t \in \N, \ P\text{-a.s.}
	\right\}.
\end{equation}
Although this functional is both an elicitable and identifiable functional, we choose to test it as the latter, where we use the identification function
%\begin{equation*}
   $ m\left((\lambda_\mu,\lambda_\sigma),x\right)
    = 
    % \frac{1}{\sqrt{13}}
    %\begin{pmatrix}
       ( \lambda_\mu - x,
        \lambda_{\mu}^2 + \lambda_\sigma^2 - x^2)'$
   % \end{pmatrix}
%\end{equation*}
% \begin{equation}
% 	s\left((\lambda_\mu,\lambda_\sigma),x\right)
%     	= \frac{1}{4}\left( \lambda_\mu - x \right)^2 + 
%     	\frac{1}{4}\left( \lambda_\sigma^2 +  \lambda_\mu^2 - x^2\right)^2
%     	\;,
% \end{equation}
which satisfies the uniform boundedness conditions required in order to generate a family of test supermartingales according to Lemma~\ref{lem:test-smg-family-bded-ident}. Using this family of tests, we apply the~\ref{eq:ftl-definition} algorithm. The result of this test and of a confidence set on a single simulated path of the data generating process is displayed in Figure~\ref{fig:mean-sd-iid-beta}.

\begin{figure}[ht]
	\centering
	\begin{subfigure}[b]{0.495\textwidth}
		\centering
        	\includegraphics[width=\textwidth]{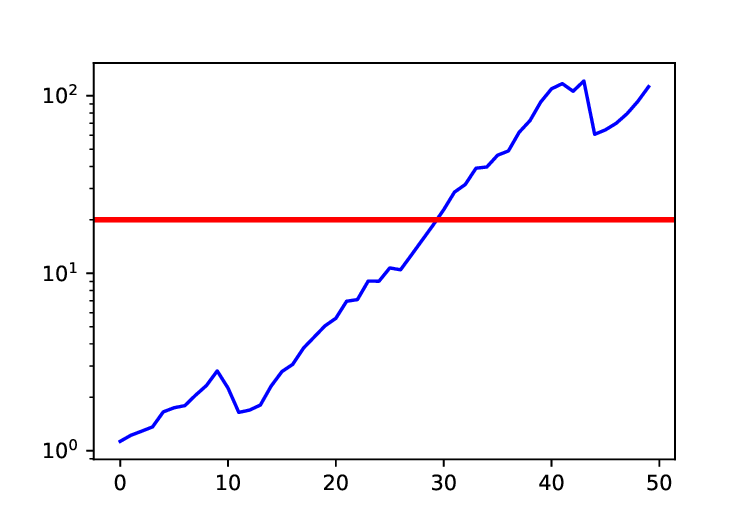}
        \caption{Test supermartingale and rejection threshold.}
        \label{subfig:mean-sd-iid-fig-test}
	\end{subfigure}
	\begin{subfigure}[b]{0.495\textwidth}
		\centering
        	\includegraphics[width=\textwidth]{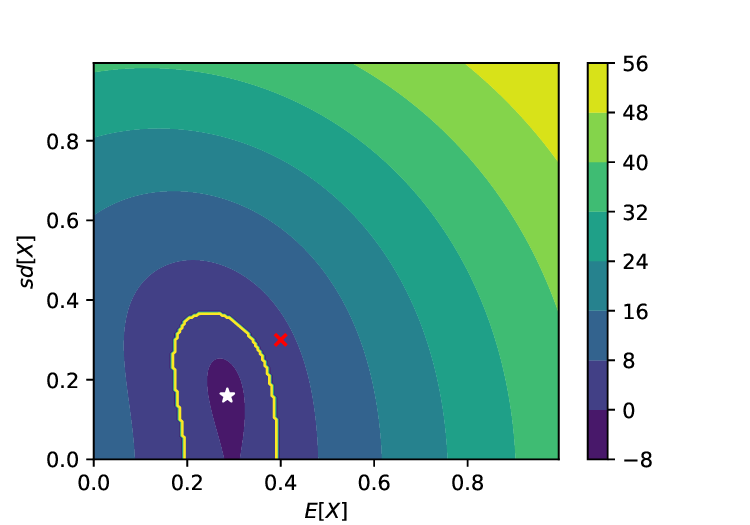}
        \caption{95\% confidence set at $t=50$.}
        \label{subfig:mean-sd-iid-fig-confint}
	\end{subfigure}
	\caption{
	Hypothesis tests and confidence sets %for the functional~\eqref{eq:mean-sd-functional} 
	for the joint mean and standard deviation of the i.i.d. Beta data generating process. 
	\textbf{(\ref{subfig:mean-sd-iid-fig-test})} The path of the test supermartingale $W_t$ constructed to test the hypothesis~\eqref{eq:mean-sd-hypothesis} (blue) with $\alpha=0.05$ rejection threshold (red). 
	\textbf{(\ref{subfig:mean-sd-iid-fig-confint})} The confidence set obtained at $t=50$ (yellow line). The color gradient represents the value of $\log(W_{50})$ over the space of possible of null hypotheses. The white $\star$ represents the true parameter values, and the red x represents the null hypothesis that is being tested in Figure (1a).
	}
	\label{fig:mean-sd-iid-beta}
\end{figure}

We conduct a second experiment with the same data generating distribution on the elicitable functional
%\begin{equation} \label{eq:var-cvar-functional}
    $T(Q) = %\begin{pmatrix}
		(\text{VaR}_{0.05}(Q),
		\text{CVaR}_{0.05}(Q))'$,
%	\end{pmatrix}
%	\;,
%\end{equation}
producing both the Value-at-Risk, that is, the 5\%-quantile, and conditional Value-at-Risk or expected shortfall, that is $\E_Q[X_i \mid X_i < \text{VaR}_{0.05}(Q) ]$, where this functional is identifiable with identification function 
%\[
$m((\lambda_v,\lambda_c),x) = %\begin{pmatrix}[1.5]
   (\mathds{1}_{x \le \lambda_v} - \alpha_0,
   x\mathds{1}_{x \le \lambda_v} - \alpha_0 \lambda_c)'
%\end{pmatrix} \;,
$
where $\alpha_0=0.05$, see \citet[Theorem 5.2]{FisslerZiegel2016}.
We test the hypothesis
\begin{equation} \label{eq:var-cvar-hypothesis}
    \Hnull = \left\{ P \in \Mcal_1(\Xcal^\N)\colon T(P(X_t \in \cdot \mid \Fcal_{t-1})) = 
    \begin{pmatrix} 0.2 \\ 0.1 \end{pmatrix} \text{ for all } t \in \N, \ P\text{-a.s.}
    \right\}
\end{equation}
where we approximately have that $T(P_\text{GT}) \approx (0.06,0.04)'$ under the true data generating measure $P_\text{GT}$. Since the underlying random variables are bounded, we construct a composite test supermartingale using the family of test supermartingales for uniformly bounded elicitable hypotheses given in Lemma~\ref{lem:test-smg-family-bded-elic}. We optimize this composite martingale using the~\ref{eq:ftl-definition} algorithm. 
% with regularization term $r_0(\lambda_\text{VaR},\lambda_\text{ES}) = \frac{1}{2}(\lambda_\text{VaR} - 0.3 )^2 + \frac{1}{2}(\lambda_\text{ES} - 0.3 )^2$ and $r_i \equiv 0$ for all $i\geq 1$, where we regularize towards the null hypothesis using a quadratic function. The results for this example are provided in Figure~\ref{fig:var-cvar-iid-beta}.

\begin{figure}[ht]
	\centering
	\begin{subfigure}[b]{0.495\textwidth}
		\centering
        	\includegraphics[width=\textwidth]{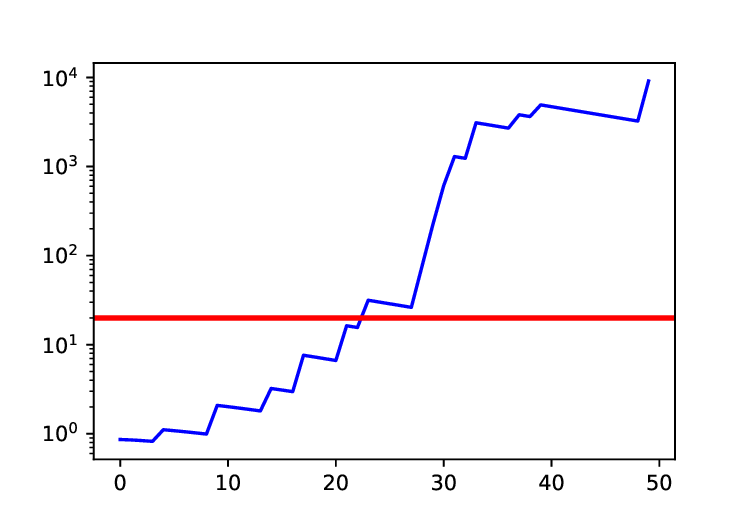}
        \caption{Test supermartingale and rejection threshold.}
        \label{subfig:var-cvar-iid-fig-test}
	\end{subfigure}
	\begin{subfigure}[b]{0.495\textwidth}
		\centering
        	\includegraphics[width=\textwidth]{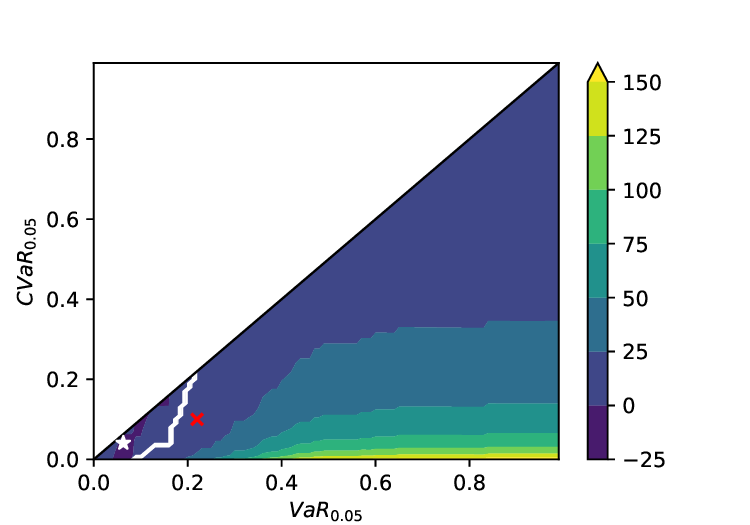}
        \caption{95\% confidence set at $t=150$}
        \label{subfig:var-cvar-iid-fig-confint}
	\end{subfigure}
	\caption{
	Hypothesis tests and confidence sets %for the functional~\eqref{eq:var-cvar-functional} 
	for the joint quantile and expected shortfall of the i.i.d. Beta data generating process. 
	\textbf{(\ref{subfig:var-cvar-iid-fig-test})} The path of the test supermartingale $W_t$ constructed to test the hypothesis~\eqref{eq:var-cvar-hypothesis} (blue) with $\alpha=0.05$ rejection threshold (red). 
	\textbf{(\ref{subfig:var-cvar-iid-fig-confint})} The confidence set obtained at $t=50$ (white line). The color gradient represents the value of $\log(W_{50})$ over the space of possible of null hypotheses, lying in the space of values such that $	\text{CVaR}_{0.05} \leq \text{VaR}_{0.05}$. The white $\star$ represents the true parameter values, and the red x represents the null hypothesis that is being tested in Figure (2a).
	}
	\label{fig:var-cvar-iid-beta}
\end{figure}

%\subsection{AR(1) Process}

Let us now consider the problem of estimating the linear coefficient of an AR(1) model. Specifically, assume that the data generating process is
\begin{equation*}
    X_{t+1} =  \beta X_t + \xi_{t+1} \,
\end{equation*}
where the $\{\xi_t\}_{t\in\N}$ are a 1-sub-Gaussian martingale difference sequence. This assumption may be replaced with $\sigma$-sub-Gaussianity for any $\sigma>0$ with straightforward minor modifications. We are interested in estimating and testing hypotheses regarding the value of $\beta\in\R$. For this purpose, we write this coefficient as the functional
%\begin{equation*}
 $   \beta(P(\cdot\mid\Fcal_t))
    = \min_{\beta^\prime \in \R} \E_P[ (X_{t+1} - \beta^\prime X_t)^2 \mid \Fcal_t ]$,
%    \;,
%\end{equation*}
which admits the identification function $m(\beta_0,(x_{t},x_{t+1}))= (x_t/|x_t|) ( \beta_0 x_t - x_{t+1})$ which will be $1$-sub-Gaussian with $\psi(u) = (1/2) u^2$ as shown in Example~\ref{ex:sub-psi-ark}. Hence, we may define the family of test supermartingales,
\begin{equation*}
    L_t^{\eta} = \prod_{i=1}^t \exp\left( \eta \, m(\beta_0,(x_{i},x_{i+1})) - \frac{\eta^2}{2}   \right)
    \;,
\end{equation*}
indexed by $\eta \in \Theta = \R$. Using this family of test supermartingales and noting that $\eta \mapsto \log L_t^{\eta}$ is strongly concave, we construct a composite test supermartingale by applying the the \ref{eq:ftl-definition} algorithm. This update rule has the advantage that it can be computed in closed form at each iteration. Indeed, letting $\overline{m}_t = (1/t) \sum_{i=1}^t m(\beta_0,(x_{i},x_{i+1}))$, we have the update rule
%\begin{equation*}
 $   \eta_{t+1} = \argmin_{\eta\in \Theta} \left\{  -\log L_t^\eta \right\}
    =
    \argmax_{\eta\in\R} \left\{  \eta \, \overline{m}_t - \frac{1}{2} \eta^2 \right\}
    =
    \overline{m}_t$.
%    \;.
%\end{equation*}
Applying this OCO algorithm to randomly generated data from the $AR(1)$ process, we test the hypothesis that $\Hcal_0 = \{ P \in \Mcal_1(\Xcal^\N) \,:\, \beta(P(\cdot\mid\Fcal_t)) = 0.65 \; \text{for all } t\in\N \}$ when data is generated from an $AR(1)$ process with $\beta=0.5$ and where $\{\epsilon_t\}_{t\in\N}$ are i.i.d. with $\epsilon_t\sim \Ncal(0,0.8)$. The results of this hypothesis test are displayed in Figure~\ref{subfig:ar1-test}. Similarly, in Figure~\ref{subfig:ar1-confint}, we display the running confidence sequence generated using the method described in Section~\ref{sec:confidence-sequences}.

\begin{figure}[th]
	\centering
	\begin{subfigure}[b]{0.495\textwidth}
		\centering
        	\includegraphics[width=\textwidth]{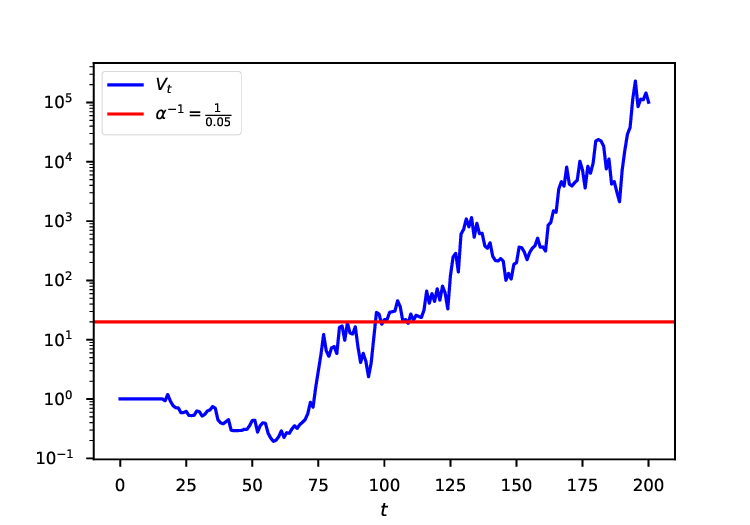}
        \caption{Testing }
        \label{subfig:ar1-test}
	\end{subfigure}
	\begin{subfigure}[b]{0.495\textwidth}
		\centering
        	\includegraphics[width=\textwidth]{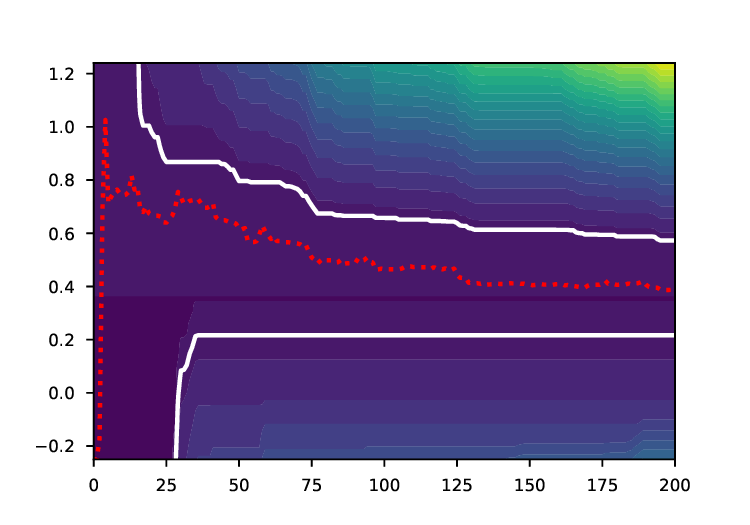}
        \caption{Confidence Sequence}
        \label{subfig:ar1-confint}
	\end{subfigure}
	\caption{Hypothesis tests and confidence sets for the parameter $\beta$ of an AR(1) data generating process. 
	\textbf{(\ref{subfig:ar1-test})} The path of the test supermartingale $W_t$ (blue) constructed and the rejection threshold (red), constructed to test the hypothesis that $\beta(P(\cdot\mid\Fcal_t)) = 0.65$ at the level $\alpha=0.05$. 
	\textbf{(\ref{subfig:ar1-confint})} The boundary of the confidence sequence (white) and the running estimate of the parameter $\beta$ (red). The color gradient represents the value of $\log(W_{t})$ over the space of possible of null hypotheses ($y$ axis) at each point in time.
}
	\label{fig:ar1-test-confint}
\end{figure}

%\section{Discussion} \label{sec:conclusion}

%\printbibliography
\bibliography{bib-files/seqstat.bib}

\appendix

\section{One-Sided and Set-Valued Hypotheses}
\label{sec:one-sided-set-valued-hypotheses}

The methodology considered in this paper may be extended towards testing one-sided and more generally, set-valued hypotheses on elicitable and identifiable functionals. Indeed, for a set $K\subseteq\Lambda$ let us consider the hypothesis 
\[
\Hnull(K) = \left\{ P \in \Mcal_1(\Xcal^\N)\colon T( P( X_t \in \cdot \mid \Fcal_{t-1})) \subset K \text{ for all } t \in \N, \text{ $P$-a.s.} \right\}.
\]
where $K$ can be thought of as a set or interval in which we believe the functional $T$ should lie when applied to the data-generating measure.

We can test the hypothesis $\Hnull(K)$ by building an e-process out of test supermartingales for pointwise hypotheses. Indeed, for each $\lambda_0\in K$, %define the pointwise hypotheses
%\[
%    \Hnull^{\lambda_0} = 
%    \left\{ P \in \Mcal_1(\Xcal^\N)\colon T( P( X_t \in \cdot \mid \Fcal_{t-1})) = \lambda_0 \text{ for all } t \in \N, \text{ $P$-a.s.} \right\}
%\]
%and 
assume that $(W^{\lambda_0}_t)_{t\in\N}$ is a test supermartingale for $\Hnull(\{\lambda_0\})=\Hnull(\lambda_0)$ given at \eqref{eq_null_seq_T}. If we define the composite process $(M_t^K)_{t\in\N}$ as $M_t^K = \min_{\lambda_0 \in K} W_t^{\lambda_0}$, it is easy to see that for any $\Fcal$-adapted stopping stopping time $\tau$, we have that
\begin{equation*}
    \E\left[ M_\tau^K \right]
    = \E\left[ \min_{\lambda_0 \in K} W_\tau^{\lambda_0} \right]
    \leq 
    \min_{\lambda_0 \in K} \E\left[ W_t^{\lambda_0} \right]
    \leq 1
    \;,
\end{equation*}
showing that $M_\tau^K > \alpha^{-1}$ can serve as a valid test for $\Hnull^K$. In principle, any test derived over the course of this paper for pointwise hypotheses can be used to generate the set of test supermartingales $W_t^{\lambda_0}$ used to test the set-valued hypothesis.

\section{Proofs}

\subsection{Proof of Lemma \ref{lem:predictably-controlled-test-smg}}
Since $W$ is nonnegative and $W_0 = 1$, we only need to check the $P$-supermartingale property for any $P \in \Hnull$. For each $t \in \N$ we use Tonelli's theorem, the fact that $W_t$ and $\pi_{t+1}$ are $\Fcal_t$-measurable, the $P$-supermartingale property of $L^\theta$, and the fact that $\pi_{t+1}$ has total mass one, to obtain
	\begin{align*}
		\E_P\left[ W_{t+1} \mid \Fcal_t \right] 
		&=
		W_t \, \E_P\left[ 
		\int_{\theta\in\Theta} 
			\frac{L_{t+1}^\theta \;}{L_{t}^\theta} \, 
			\pi_{t+1}(d\theta)  
		\mid \Fcal_t \right]
		\\&=
		W_t \, 
		\int_{\theta\in\Theta} 
		\E_P\left[ 
		\frac{L_{t+1}^\theta \;}{L_{t}^\theta} \, 
		\mid \Fcal_t \right]
		\pi_{t+1}(d\theta)
		\leq 
		W_t \, \pi_{t+1}(\Theta) = W_t.
	\end{align*}
This shows that $W$ is a $P$-supermartingale.

\subsection{Proof of Lemma \ref{lem:test-smg-family-bded-elic}}
	First, we note that $\inf_{\lambda\in\Lambda, x\in\Xcal} \{ s(\lambda_0,x) - s(\lambda,x) \} > -1$ implies that 
	%$\left( 1 + s(\lambda_0,x_i) - s(\lambda,x_i) \right) > 0$ almost surely, and hence 
	each $L^\lambda$ is nonnegative. Next, for each $t\in\N$, $\lambda\in\Lambda$ and $P\in\Hnull$, we may compute
	\begin{align*}
		\E_P\left[ L_{t+1}^\lambda \mid \Fcal_t \right]
		= L_t^\lambda \left(1 + \E_P\left[  s(\lambda_0,X_{t+1}) - s(\lambda,X_{t+1})  \mid \Fcal_t \right]\right)
		\;.
	\end{align*}
	By assumption, since $P\in\Hnull$ with $\Hnull$ defined according to equation~\eqref{eq_elicitable_null}, we have that \[-1 < \E_P\left[  s(\lambda_0,X_{t+1}) - s(\lambda,X_{t+1})  \mid \Fcal_t \right] \leq 0\;,\] which implies that
	\begin{equation*}
		L^\lambda_t \left( 1 + \E_P\left[  s(\lambda_0,X_{t+1}) - s(\lambda,X_{t+1})  \mid \Fcal_t \right] \right) \leq L^\lambda_t
		\;,
	\end{equation*}
	and hence $\E_P\left[ L_{t+1}^\lambda \mid \Fcal_t \right] \leq L^\lambda_t$. Lastly, 
	%since $L^\lambda_t > 0$ and 
	$L^\lambda_0 = 1$.
	%, we conclude that each $L^\lambda$ is an $\Hnull$ test supermartingale.

\subsection{Proof of Lemma \ref{lem:test-smg-family-bded-ident}}
    We begin with the proof that $\Abd_{m,\lambda_0}$ is convex with non-empty interior. By assumption, there exists $C>0$ such that $\sup_{x\in\Xcal}\| m(\lambda_0,x) \| \leq C < \infty$. Hence, for any $\|\eta\| <  1/C$, we have by the Cauchy-Schwarz inequality that
    \begin{equation*}
        \langle \eta , m(\lambda_0,x) \rangle
        \geq 
        - \| \eta \| \| m(\lambda_0,x) \| 
        \geq - \| \eta \| \sup_{x\in\Xcal}\| m(\lambda_0,x) \| 
        > -1
        \;,
    \end{equation*}
    showing that $\Abd_{m,\lambda_0}$ has non-empty interior. Convexity follows from linearity of the scalar product in the first argument.
    %To demonstrate convexity, we note that for any $\rho\in[0,1]$ and $\eta_0,\eta_1\in \Abd_{m,\lambda_0}$,
    %\begin{align*}
    %    \inf_{x\in\Xcal} \left\langle
    %    \rho \,\eta_0 + (1-\rho)\,\eta_1
    %    \mathrel{,} m(\lambda_0,x)
    %    \right\rangle
    %    &\geq
    %    \rho \inf_{x\in\Xcal} \left\langle
    %    \eta_0
    %    \mathrel{,} m(\lambda_0,x)
    %    \right\rangle
    %    +
    %    (1-\rho) \inf_{x\in\Xcal} \left\langle
    %    \eta_1
    %    \mathrel{,} m(\lambda_0,x)
    %    \right\rangle
    %    \\ &>
    %    -\rho - (1-\rho)
    %    = -1
    %    \;,
    %\end{align*}
    %and hence $\rho \,\eta_0 + (1-\rho)\,\eta_1 \in \Abd_{m,\lambda_0}$.
    
	%5Now we proceed to show that the $L_t^\eta$ are valid test martingales for all $\eta\in %\Abd_{m,\lambda_0}$ We first note that 
	%By the definition of $\Abd_{m,\lambda_0}$, 
	For $\eta \in \Abd_{m,\lambda_0}$, we have
	%\begin{align*}
		$1 + \langle \eta \mathrel{,} m(\lambda_0,X_i)\rangle> 0$, 
%		\;,
%	\end{align*}
	and hence the $L^\eta$ are all nonnegative. For each $t\in\N$, and $P\in\Hnull$, it holds that
	\begin{align*}
		\E_P\left[ L_{t+1}^\eta \mid \Fcal_t \right]
		= L_t^\eta \left(1 + \langle \eta \mathrel{,} \E_P\left[ m(\lambda_0,X_{t+1}) \mid \Fcal_t \right] \rangle \right)
		\;.
	\end{align*}
	By equation~\eqref{eq_identifiable_null}, we have that $\E_P\left[ \langle \eta \mathrel{,} m(\lambda_0,X_{t+1})\rangle  \mid \Fcal_t \right] = 0$ and thus $\E_P\left[ L_{t+1}^\eta \mid \Fcal_t \right] = L_t^\eta$ for all $t$. Lastly, since $L_0^\eta = 1$ and $L_t^\eta >0$ we conclude that $L^\eta$ is an $\Hnull$ test martingale.

\subsection{Proof of Lemma \ref{lem:sub-psi-iff-neg-mean}}

   Let $Y$ be a supermartingale. By the sub-$\psi$ property, %we have that
 %   \begin{equation*}
 %       \E\left[ e^{u \, ( Y_t - \bar Y_t ) - V_t \, %\psi(u)} \middle| \Fcal_{t-1} \right]
 %       \leq 
 %       e^{u \, ( Y_{t-1} - \bar Y_{t-1} ) - V_{t-1} %\, \psi(u)}
 %       \;.
 %   \end{equation*}
 %   Multiplying both sides by $e^{u\bar{Y}_t}$ 
 and denoting the forward increments of processes as $\Delta Z_{t} = Z_{t+1} - Z_{t}$,
    %$e^{u \, \sum_{i=1}^{t} \E[Y_i - Y_{i-1} \mid \Fcal_{i-1}] }$ 
    we have that
    \begin{align*}
        e^{u\bar{Y}_t}\E\left[ e^{u \,  (Y_t-\bar{Y}_t) - V_t \psi(u)} \middle| \Fcal_{t-1} \right]
        &\leq 
        e^{u \, Y_{t-1} - V_{t-1} \psi(u)}
        \;
        e^{u \, \Delta \bar Y_{t-1} }
        \\ &=
        e^{u \, Y_{t-1} - V_{t-1} \psi(u)}
        \;
        e^{u \, \E[ \Delta Y_{t-1} \mid \Fcal_{t-1} ] }
        \leq
        e^{u \, Y_{t-1} - V_{t-1} \psi(u)}
        \;,
    \end{align*}
    where the last inequality follows since the supermartingale property implies that $\E[ Y_{t} - Y_{t-1} \mid \Fcal_{t-1} ] \leq 0$.
    
    Now, assume that $(e^{u \, Y_t - V_t \, \psi(u)})_{t \in \N}$ is a supermartingale for all $u\in[0,\umax)$. Since $\psi^\prime(0)=0$ %and letting $\Delta Y_{t-1} = Y_t - Y_{t-1}$ and $\Delta V_{t-1} = V_t - V_{t-1}$, 
    we have
    \begin{align*}
        \E[ \Delta Y_{t-1} \mid &\Fcal_{t-1} ]
        = \E\left[ \Delta Y_{t-1} - \Delta V_{t-1} \psi^\prime(0) \mid \Fcal_{t-1} \right]
        \\ &=
        \E\left[ 
        \lim_{u\downarrow 0} \frac{e^{u \, \Delta Y_{t-1} - \Delta V_{t-1} \, \psi(u)} - 1}{u}
        \middle| \Fcal_{t-1} \right]
        =
        \lim_{u\downarrow 0}
        \frac{ \E\left[e^{u \, \Delta Y_{t-1} - \Delta V_{t-1} \, \psi(u)}\mid \Fcal_{t-1} \right] - 1}{u}
        \;,
    \end{align*}
    where in the third line we use the fact that since $\psi$ is convex and differentiable at zero, there exists a closed neighborhood including zero in which $(e^{u \, \Delta Y_{t-1} - \Delta V_{t-1} \, \psi(u)} - 1)/u$ is continuous and hence has an integrable upper bound, allowing us to exchange the limit and the expectation by the dominated convergence theorem. 
    Lastly, noting that $( \E[e^{u \, \Delta Y_{t-1} - \Delta V_{t-1} \, \psi(u)}\mid \Fcal_{t-1} ] - 1 )/u \leq 0$ by the assumed supermartingale property, we conclude that $\E\left[ \Delta Y_{t-1} \mid \Fcal_{t-1} \right] \leq  0$, demonstrating that $(Y_t)_{t\in\N}$ is a supermartingale, as desired.

\subsection{Proof of Lemmas \ref{lem:test-smg-family-subpsi-elic} and \ref{lem:test-smg-family-subpsi-ident}}
% \fbox{check if this needs adjusting}
    Let us first consider the setting of Lemma~\ref{lem:test-smg-family-subpsi-elic}.
	By construction, each of the processes $L^{\lambda,u}$ is nonnegative and satisfies $L^{\lambda,u}_0=1$. Fixing $(\lambda,u)$, we have that for all $P\in\Hnull$,
	\begin{align*}
		\E_P\left[ 
			L^{\lambda,u}_{t+1} \mid \Fcal_{t}
		\right] 
		= 
		\E_P\left[ 
			e^{u Y_{t+1}^\lambda - V_{t+1}^\lambda \psi_\lambda(u) } \mid \Fcal_{t}
		\right] 
		\leq e^{u Y_t^\lambda - V_t^\lambda \psi_\lambda(u) }
		= L^{\lambda,u}_{t} \text{ a.s.},
	\end{align*}
	which follows by noting that $Y^\lambda$ is a supermartingale according to the definition of $\Hcal_0$ and by applying Lemma~\ref{lem:sub-psi-iff-neg-mean}. Hence, each $L^{\lambda,u}$ is a valid test supermartingale.

    For Lemma~\ref{lem:test-smg-family-subpsi-ident}, we note that the proof works analogously to that of Lemma~\ref{lem:test-smg-family-subpsi-elic}.

\subsection{Proof of Lemma \ref{lem:concavity-test-smg}}

	We prove this claim by proving it for the four separate classes of test supermartingales of Lemmas~\ref{lem:test-smg-family-bded-elic},~\ref{lem:test-smg-family-bded-ident},~\ref{lem:test-smg-family-subpsi-elic} and~\ref{lem:test-smg-family-subpsi-ident}.

	\noindent\emph{Lemma~\ref{lem:test-smg-family-bded-elic}.} In this case, the family of test supermartingales $\{L^\theta\}_{\theta\in\Theta}$, where $\theta=\lambda\in\Lambda$, can be written as
	\begin{equation}
		\log L^\theta_t
		= \sum_{1\leq i \leq t} \log\left( 1 + 
			s(\lambda_0,x_i) - s(\lambda,x_i)
		\right)
		\;.
	\end{equation}
Let us define $f_i(\lambda) = s(\lambda_0,x_i) - s(\lambda,x_i)$, which is concave since $s(\lambda,x_i)$ is convex. Using the concavity and monotonicity of $\log$ and the concavity of the $f_i$, we have that for any $\lambda,\lambda^\prime \in \Lambda$ and $\rho\in(0,1)$,
	\begin{align*}
		\log( 1 + f_i( \rho \lambda + (1-\rho) \lambda^\prime ) )
		&\geq \log( \rho( 1 + f_i(\lambda ) ) + (1-\rho) f_i(\lambda^\prime) )
		\\ &\geq \rho \, \log( 1 + f_i(\lambda ) ) + (1-\rho) \log(1+f_i(\lambda^\prime) )
		\;,
	\end{align*}
	showing that $\log( 1 + f_i(\lambda ) )$ is concave.
	Hence, $\log L^\theta_t$ is a sum of concave functions of $\theta$, and hence is concave as well.

	\noindent\emph{Lemma~\ref{lem:test-smg-family-bded-ident}.} In this case, the family of test supermartingales $\{L^\theta\}_{\theta\in\Theta}$, where $\eta = \theta \in \Theta \subseteq \Abd_{m,\lambda_0}$, can be written as
	\begin{equation}
		\log L^\theta_t
		= \sum_{1\leq i \leq t} \log\left( 1 + \left\langle \eta \mathrel{,} m(\lambda_0,x_i) \right\rangle\right)
		\;.
	\end{equation}
	By the concavity of the $\log$, each of the $\log\left( 1 + \left\langle \eta \mathrel{,} m(\lambda_0,x_i) \right\rangle\right)$ are concave in $\eta$. Hence, $\log L^\theta_t$ is a sum of concave functions of $\theta$, and hence is concave as well.

	\noindent\emph{Lemma~\ref{lem:test-smg-family-subpsi-elic}.} In this case, the family of test supermartingales $\{L^\theta\}_{\theta\in\Theta}$, where $(\lambda,u) = \theta \in \Theta \subseteq [0,u_\text{max})\times \Lambda$, can be written as
	\begin{equation}
		\log L^\theta_t
		= \sum_{1\leq i \leq t}
		\left\{
		u(\theta) \, \left( s(\lambda_0,x_i) - s(\lambda(\theta),x_i) \right)
		- v_i^{\lambda(\theta)} \psi_{\lambda(\theta)}(u(\theta))
		\right\}
		\;.
	\end{equation}
	By assumption, each summand is concave in $\theta=(\lambda,u)$ and hence $\log L^\theta_t$ must also be concave in $\theta$.

	\noindent\emph{Lemma~\ref{lem:test-smg-family-subpsi-ident}.} In this case, the family of test supermartingales $\{L^\theta\}_{\theta\in\Theta}$, where $(\eta,u) = \theta \in \Theta \subseteq \Apsi_{m,\lambda_0} \times [0,\umax)$, can be written as
	\begin{equation}
		\log L^\theta_t
		= \sum_{1\leq i \leq t}
		\left\{
		\left\langle u(\theta) \mathrel{,} m(\lambda_0,x_i) \right\rangle
		- v_i^{\lambda(\theta)} \psi_{\eta(\theta)}(u(\theta))
		\right\}
		\;.
	\end{equation}
	By assumption, each summand is concave and hence $\log L_t^\theta$ is a sum of concave functions and hence is itself almost surely concave.

\subsection{Proof of Theorem \ref{thm:asymptotic-power-1}}

Using the definition of regret in equation~\eqref{eq:def-regret}, we have that for all $\theta \in \Theta$,
\[
	\frac{1}{T} \log W_T 
	\geq
	\frac{1}{T} \log L_T^{\theta}
	- \frac{1}{T} B_T
	\;,
\]
where $B_T = \reg_T$. Applying the assumptions that $B_T = o(T)$ and \eqref{eq:liminf_condition}, we have that 
	\begin{align*}
		\liminf_{T\to\infty} \frac{ \log \max_{0\leq t\leq T} W_t}{T} \geq
		\liminf_{T\to \infty} \frac{\log W_T}{T}
		&\geq 
		\liminf_{T\to \infty} \left\{ \frac{\log L_T^{\theta^\prime}}{T} - \frac{B_T}{T} \right\}
		=
		\liminf_{T\to \infty} \frac{\log L_T^{\theta^\prime}}{T}
		> 0\;, 
	\end{align*}
	where these inequalities hold $Q$-almost surely. %Now, since $\lim_{T\to\infty} \frac{ \max_{0\leq t\leq T} \log V_t}{T} > 0$ $Q$-almost surely, by the definition of the limit 
	This implies that, $Q$-almost surely, there exists a $T^\prime\in\N$ and $\epsilon>0$ such that $\max_{0\leq t\leq T} W_t \geq e^{\epsilon \, T}$ for all $T>T^\prime$. Hence, we may conclude that $\lim_{T\to\infty} \max_{0\leq t\leq T} W_t \geq \lim_{T \to \infty} e^{\epsilon T} = \infty$ $Q$-almost surely, as desired.

\subsection{Proof of Proposition \ref{prop:stationary-ergodic-asymtotic-power-1}}

%	We prove the claim by treating cases (i)-(ii), (iii) and (iv) separately.
	
	\emph{Cases (i) and (ii):} By Lemma~\ref{lem:test-smg-family-bded-elic} or~\ref{lem:test-smg-family-bded-ident}, we may write each test submartingale as
%	\begin{equation*}
$		\log L_t^\theta = \sum_{i=1}^t \log\left(1 + f(\theta,X_i) \right)$,
%	\end{equation*}
	where we have either $f(\theta,x) = s(\lambda_0,x) - s(\theta,x)$ or $f(\theta,x) = \langle \theta,m(\lambda_0,x)\rangle$, respectively. Since $(X_t)_{t\in\N}$ is assumed to be stationary and ergodic under $Q_{\infty}$, we may apply the Birkhoff-Khintchin ergodic theorem~\cite[Theorem 1]{cornfeld2012ergodic} to obtain
	\begin{equation}
		\lim_{t\to\infty} \frac{\log L_t^\theta}{t}
		= \lim_{t\to\infty} \frac{1}{t} \sum_{i=1}^t \log\left(1 + f(\theta,X_i) \right)
		= \E_{Q_{\infty}}\left[ \log\left(1 + f(\theta,X_{\infty}) \right) \right]
		\;.
	\end{equation}
%	where there exists a $\theta\in\Theta$ where the right-most quantity is strictly positive by assumption in (i) and (ii), concluding the proof.
	
\noindent\emph{Cases (iii) and (iv):} By Lemma~\ref{lem:test-smg-family-subpsi-elic} or~\ref{lem:test-smg-family-subpsi-ident}, we may write each test submartingale as
	\begin{equation*}
		\log L_t^\theta = \log L_t^{(\theta^\prime,u)} = \sum_{i=1}^t \left\{ 
		    u \, f(\theta^\prime,X_i) - v_i^{\theta^\prime} \psi_{\theta^\prime}(u)
		\right\}
		\;,
	\end{equation*}
	where have $u\in[0,\epsilon)$ with $\epsilon>0$, $v_i^{\theta^\prime}>0$ and,  either $f(\theta^\prime,x) = s(\lambda_0,x) - s(\theta',x)$ in the elicitable case, or $f(\theta^\prime,x) = \langle \theta' , m(\lambda_0,x)\rangle$ in the identifiable case.
%	\begin{enumerate}[label=\roman*.]
%		\item \emph{elicitable case:} For $\theta^\prime=\lambda\in\Lambda$,
%		$f(\theta^\prime,x) = s(\lambda_0,x) - s(\lambda,x)$ and $\tilde\psi_{\theta^\prime}(u) = \psi_\lambda(u)$, or
%		\item \emph{identifiable case:} For $\eta=\theta^\prime\in \Theta^\prime \subseteq \Abd_{m,\lambda_0}$, $f(\theta^\prime,x) = \langle \eta , m(\lambda_0,x)\rangle$ and $\tilde \psi_{\theta^\prime}(u) = \psi_{\eta}(u)$. 
%	\end{enumerate}
	By assumption, in each case there exists a $\theta^\prime_0$ (where $\Theta^\prime \subseteq \Lambda$ in the elicitable case and $\Theta^\prime \subseteq \Apsi_{m,\lambda_0}$ in the identifiable case) such that $E_{Q_\infty} f(\theta^\prime_0,X_{\infty}) = c > 0$. 
	Since $(X_t)_{t\in\N}$ is assumed to be stationary and ergodic under $Q$, we may apply the Birkhoff-Khintchin ergodic theorem~\cite[Theorem 1]{cornfeld2012ergodic} to $L^{(\theta^\prime_0,u)}$ to obtain
	\begin{align*}
		\lim_{t\to\infty} \frac{\log L_t^{(\theta^\prime_0,u)}}{t}
		&= \lim_{t\to\infty} \frac{1}{t} \sum_{i=1}^t \left\{ u\, f(\theta^\prime_0,X_i) -  v_i^{\theta_0^\prime} \psi_{\theta^\prime_0}( u ) \right\}
		\\ &= u\, \E_{Q_{\infty}}\left[f(\theta^\prime_0,X_{\infty})\right] -  \psi_{\theta^\prime_0}(u)
		= u\, c - \bar{v} \psi_{\theta^\prime_0}(u)
		\;,
	\end{align*}
	where $0< \bar{v} = \lim_{t\to\infty} \frac{1}{t} \sum_{i=1}^t \bar{v}$ which is non-negative and finite by assumption.
	By Definition~\ref{def:sub-psi-rv}, we find that $\psi_{\theta^\prime_0}(0) = 0$ and $\lim_{u\searrow 0} \psi_{\theta^\prime_0}(0)/u = 0$.
	Hence, letting $g(u) = u\, c - \bar{v} \psi_{\theta^\prime_0}(u)$, we find that, $g(0) = 0$ and $g^\prime(0) = \lim_{u\searrow0} g(u)/u = c > 0$. Hence, there exists $u^\prime_0\in(0,\epsilon)$ such that $g(u^\prime_0)>0$. By picking this $u^\prime_0$, we then have that
%	\begin{equation*}
		$\lim_{t\to\infty} \log L_t^{(\theta^\prime_0,u^\prime_0)}/t = g(u^\prime_0) = c > 0$,
%	\end{equation*}
	yielding the desired result.
\end{document}